\definecolor{rouge}{rgb}{0.7,0.00,0.00}
\definecolor{vert}{rgb}{0.00,0.5,0.00}
\definecolor{bleu}{rgb}{0.00,0.00,0.8}
\newtheorem{theorem}{Theorem}[section]
\newtheorem{lemma}[theorem]{Lemma}
\newtheorem{corollary}[theorem]{Corollary}
\newtheorem{remark}[theorem]{Remark}
\numberwithin{equation}{section}
\def\bb#1{\mathbb{#1}}
\def\geq{\geqslant}
\def\leq{\leqslant}
\def\l{\left(}
\def\r{\right)}
\def\p{\mathbb{P}}
\def\e{\mathbb{E}}
\def\({\left(}
\def\){\right)}
\begin{document}
\title[Asymptotic of the distribution for BPREs]{Asymptotic of the distribution and harmonic moments for a supercritical branching process in a random environment}

\author{Ion~Grama}
\curraddr[Grama, I.]{ Universit\'{e} de Bretagne-Sud, LMBA UMR CNRS 6205,
Vannes, France}
\email{ion.grama@univ-ubs.fr}

\author{Quansheng Liu}
\curraddr[Liu, Q.]{ Universit\'{e} de Bretagne-Sud, LMBA UMR CNRS 6205,
Vannes, France}
\email{quansheng.liu@univ-ubs.fr}

\author{Eric Miqueu}
\curraddr[Miqueu, E.]{Universit\'{e} de Bretagne-Sud, LMBA UMR CNRS 6205,
Vannes, France}
\email{eric.miqueu@univ-ubs.fr}
\date{\today }
\subjclass[2010]{ Primary 60J80, 60K37, 60J05. Secondary 60J85, 92D25. }
\keywords{Branching processes, random environment, harmonic moments, asymptotic distribution, decay rate}

\begin{abstract}
Let $(Z_n)$ be a supercritical branching process in an independent and identically distributed random environment $\xi$. 
We show the exact decay rate of the probability $\mathbb{P}(Z_n=j | Z_0 = k)$
as $n \to \infty$, for each $j \geq k,$ assuming that $\p (Z_1 = 0) =0$. 
We also determine the critical value for the existence of harmonic moments of the random variable $W=\lim_{n\to\infty}\frac{Z_n}{\mathbb E (Z_n|\xi)}$ under a simple moment condition.
\vskip2mm
\noindent {\sc R\' esum\'e.} 
Soit $(Z_n)$ un processus de branchement surcritique en environnement al\'eatoire  $\xi$ ind\'ependant et identiquement distribu\'e. 
Nous donnons un \'equivalent de la probabilit\'e $\mathbb{P}(Z_n=j | Z_0 = k)$
lorsque $n \to \infty$, pour tout $j \geq k,$ sous la condition $\p (Z_1 = 0) =0$. 
Nous d\'eterminons \'egalement la valeur critique pour l'existence des moments harmoniques de la variable al\'eatoire limite $W=\lim_{n\to\infty}\frac{Z_n}{\mathbb E (Z_n|\xi)}$, sous une hypoth\`ese simple d'existence de moments.
\end{abstract}

%\begin{abstract}
%Soit $(Z_n)$ un processus de branchement en environnement al\'eatoire $\xi$. 
%Nous donnons un \'equivalent de la probabilit\'e $\mathbb{P}(Z_n=j | Z_0 = k)$
%lorsque $n \to \infty$, pour tout $j \geq k,$ sous la condition $\p (Z_1 = 0) =0$. 
%Nous d\'eterminons \'egalement la valeur critique du moment harmonique de la variable al\'eatoire limite $W=\lim_{n\to\infty}\frac{Z_n}{\mathbb E (Z_n|\xi)}$ sous une hypoth\`ese naturelle d'existence de certains moments.
%\end{abstract}

\maketitle

\section{Introduction}

A branching process in a random environment (BPRE) is a natural and important generalisation
of the Galton-Watson process, where the reproduction law varies according to a random environment indexed by time.
It was introduced for the first time in Smith and Wilkinson \cite%
{smith} to modelize the growth of a population submitted to an environment.
For background concepts and basic results concerning a BPRE we refer to Athreya and Karlin \cite%
{athreya1971branching, athreya1971branching2}.
In the critical and subcritical regime the branching process goes out and the research interest is mostly concentrated  on the survival probability
and conditional limit theorems,
see e.g. Afanasyev, B\"oinghoff, Kersting, Vatutin \cite{afanasyev2012limit, afanasyev2014conditional}, Vatutin \cite{Va2010},  Vatutin and Zheng \cite{VaZheng2012}, and the references therein.
In the supercritical
case, a great deal of current research has been focused on large deviations, 
see Bansaye and Berestycki \cite{bansaye2009large}, Bansaye and B\"oinghoff \cite{bansaye2011upper, bansaye2013lower, bansaye2014small}, B\"oinghoff and Kersting \cite{boinghoff2010upper}, Huang and Liu \cite{liu}, Nakashima \cite{Nakashima2013lower}. 
In the particular case when the offspring distribution is geometric, precise asymptotics can be found in 
B\"oinghoff \cite{boinghoff2014limit}, Kozlov \cite{kozlov2006large}.

An important closely linked issue is the asymptotic behavior of the distribution of a BPRE $(Z_n),$ 
%for small values (also called in the litterature small value probability,  
 i.e. the limit  of $\mathbb{P} ( Z_n = j | Z_0=k )$ as $n \to \infty$, for fixed $j \geq 1$ when the process starts with $k\geq 1$
 initial individuals. 
%For small values of $j$ the probability $\mathbb{P} ( Z_n = j | Z_0=k )$ will be called probability of small value.
For the Galton-Watson process, the asymptotic  is well-known and can be found in the book by Athreya \cite{athreya}.
 For the need of the lower large deviation principle of a BPRE, Bansaye and B\"oinghoff have shown in \cite{bansaye2014small} that, 
 for any fixed $j\geq 1$ and $k \geq 1$ it holds $n^{-1} \log \p ( Z_n = j | Z_0 = k) \to -\rho $ as $ n \to \infty$, where $\rho >0$ is a constant. This result characterizes  the exponential decrease of the probability $\p ( Z_n = j | Z_0 = k)$ for the general supercritical case, when extinction can occur. 
 However, it stands only on a logarithmic scale, and the constant $\rho$ is not explicit, 
except when the reproduction law is fractional linear, for which $\rho$ is explicitly computed in \cite{bansaye2014small}. 
Sharper asymptotic results for the fractional linear case can be found in \cite{boinghoff2014limit}.
%where it has been establish  
%that, as $n \to \infty $, $\p (Z_n=1) \sim  \nu e^{n \rho}$, with $\nu >0$ is some positive constant.
% Il faudra être plus précis...
%If we assume no extinction, a rough bound of the probability $\p ( Z_n \leq j)$ has been obtained in \cite{bansaye2009large}.
In the present paper, we improve the results of \cite{bansaye2014small} and extend those of  \cite{boinghoff2014limit} 
 by giving an equivalent of the probability $\p ( Z_n =j | Z_0 =k)$ as $n \to \infty$, 
 provided that each individual gives birth to at least one child. 
 These results are important to understand the asymptotic law of the process, and are useful to obtain sharper asymptotic large deviation results.
 We also improve the result of \cite{liu} about  the critical value for the harmonic 
 moment of the limit variable $W=\lim_{n\to\infty}\frac{Z_n}{\mathbb E (Z_n|\xi)}.$  %by assuming weaker assumptions. 
 
Let us explain briefly the findings of the paper. 
Assume that $\p (Z_1=0)=0.$
From Theorem \ref{thm small value probability 2} of the paper it follows that when $Z_0=1,$
% Theorem \ref{thm harmonic moments W}.  
\begin{equation}
\label{into-eq small val prob}
\mathbb{P} \left( Z_n = j \right) \underset{n \to \infty}{\sim} \gamma ^n q_{j} \quad \text{with } \quad \gamma=\p (Z_1=1)>0,
\end{equation}
where $q_{j} \in (0, + \infty )$ can be computed as the unique solution of some recurrence equations;
moreover, the generating function $Q(t)=\sum_{j=1}^{\infty} q_j t^j$ has the radius of  convergence equal to $1$ 
and is characterized by the functional equation
\begin{equation} \label{intro-eq func Q}
\gamma Q(t) = \mathbb E Q(f_0 (t)),\quad t \in [0,1),
\end{equation}
where $f_0(t)=\sum_{i=1}^{\infty} p_i ( \xi_0 ) t^i$ is the conditional generating function of $Z_1$ given the environment.  
These results extend the corresponding results for the Galton-Watson process (see \cite{athreya}).
They also improve and complete the results in \cite{bansaye2014small} and \cite{boinghoff2014limit}:  it was proved in \cite{bansaye2014small}  
that $\frac{1}{n}\log \mathbb{P} \left( Z_n = j \right) \to \log \gamma,$ and in \cite{boinghoff2014limit} that 
$\mathbb{P} \left( Z_n = 1 \right) \underset{n \to \infty}{\sim} \gamma ^n q_{1}$ in the  fractional linear case.

In the proofs of the above results we make use of Theorem \ref{thm harmonic moments W} 
which shows that,
%assuming $\p (p_1(\xi_0) <1 ) =1$ and writing 
with $m_0=\e _{\xi} Z_1,$ we have, for any fixed $a>0,$ 
\begin{equation}\label{intro-eq mom harm}
\mathbb{E} W^{-a} < \infty \quad \text{if and only if} \quad \mathbb{E} \left[ p_1 (\xi_0) m_0^a \right] <1,
\end{equation}
under a simple moment condition $\mathbb{E} \left[ m_0^{p} \right] < \infty $ for some $p >a,$ 
which is much weaker than the boundedness condition  
%that $ c_1\leq m_0, m_0(1+?delta) \leq c_2,$ for some $\delta >1$ and $$ 
used in \cite[Theorem 1.4]{liu} (see \eqref{condition H} below).
%and $p>0$ such that $\mathbb{E} \left[ m_0^{p} \right] < \infty $
%which much weaker 

%concerning the critical value for the existence of the harmonic moments of $W$ under a 
%a moment condition which much weaker that thos in  
%Assume that there exists some constants 
%see .
%we have, for any $a>0,$

%In the proof about the critical for harmonic moments we use  
%the functional relation 
%$\phi _{\xi} (t) = \mathnormal{f}_0 \left( \phi_{T \xi} \left( \frac{t}{m_0} \right) \right)$ 

For the proof of  Theorem \ref{thm harmonic moments W} our argument consists of two steps. 
In the first step we prove the existence of the harmonic moment of some order $a>0$
using the functional relation \eqref{eq relation phi xi 1}. 
The key argument to approach  the critical value is in the second step, which is based on the method developed in 
\cite[Lemma 4.1]{liu1999asymptotic}
 obtaining the decay rate of 
the Laplace transform $\phi(t)=\e e^{-tW}$ as $t \to \infty,$ starting from a functional inequation of the form
\begin{equation}
\phi (t) \leq q \mathbb{E} \phi ( Yt) + C t^{-a},
\label{basic001}
\end{equation}
where $Y$ is a positive random variable. 
%The difficult point in the random environment case is that such an inequality is not easy to obtain. 
To prove \eqref{basic001} we use a recursive procedure for branching processes starting with $k$
individuals and choosing $k$ large enough.  
The intuition behind this consideration is that as the number of starting individuals $k$ becomes larger, the 
decay rate of $\phi_k(t)=\e \left[ e^{-tW} | Z_0=k \right]$ as $t \to \infty$ is higher which leads to the desired functional inequation.  

In the proof of Theorem \ref{thm small value probability 2}, the equivalence relation \eqref{into-eq small val prob}
and the recursive equations for the limit values $(q_j)$ come from simple monotonicity arguments. %  $\frac {\mathbb{P} \left( Z_n = j \right) } {\gamma ^n} $.
The difficulty is to characterize the sequence $(q_j)$ by its generating function $Q$. 
To this end, we first calculate the radius of convergence of $Q$  by
determining the asymptotic behavior of the normalized  harmonic moments $\e Z_n^{-r}/\gamma^{n}$ as $n \to\infty$ for some $r>0$
large enough
%via Theorem   \ref{thm harmonic moments W}
and by using the fact that $\sum_{j=1}^{\infty} j^{-r} q_j = \lim_{n\to \infty} \e Z_n^{-r}/\gamma^{n}.$ 
We then show that the functional equation  \eqref{intro-eq func Q} has a unique solution
subject to an initial condition.

 The rest of the paper is organized as follows. 
 The main results,  Theorems \ref{thm harmonic moments W} and \ref{thm small value probability 2},
are presented in Section \ref{secMain}.
Their proofs are given in Sections \ref{sec harmonic moments W} and \ref{sec small value non extinction}.
% are devoted respectively  to the proof of these two theorems.

%Throughout the paper, we denote by $C$ an absolute constant which may differ from line to line. 

\section{Main results}\label{secMain}

A BPRE $(Z_n)$ can be described as follows.
The random environment is
represented  by a sequence $\xi = (\xi_0, \xi_1 , ... ) $ of independent and
identically distributed random variables (i.i.d.\ r.v.'s), whose
realizations determine the probability generating functions
\begin{equation}
 f_n (t) = \mathnormal{f} (\xi_n ,t) = \sum_{i=0}^{\infty} p_i ( \xi_n ) t^i,
\quad t \in [0,1], \quad p_i ( \xi_n ) \geq 0, \quad \sum_{i=0}^{ \infty}
p_i (\xi_n) =1.  \label{defin001}
\end{equation}
The branching process $(Z_n)_{n \geq 0}$ is defined by the relations
\begin{equation}  \label{relation recurrence Zn}
Z_0 = 1, \quad Z_{n+1} = \sum_{i=1}^{Z_n} N_{n, i}, \quad \text{for} \quad n
\geq 0,
\end{equation}
where $N_{n,i} $ is the number of children of the $i$-th individual of the
generation $n$. Conditionally on the environment $\xi $, the r.v.'s $N_{n,i} $
(i = 1, 2, ...) are independent of each other with common probability generating function $\mathnormal{f}_n,$
and also independent of $Z_n$. 

In the sequel we denote by $\mathbb{P}_{\xi}$ the
\textit{quenched law}, i.e.\ the conditional probability when the
environment $\xi$ is given, and by $\tau $ the law of the environment $\xi$.
Then
$\mathbb{P}(dx,d\xi) = \mathbb{P}_{\xi}(dx) {\tau}(d\xi)$
is the total law of the process, called
\textit{annealed law}. The corresponding quenched and annealed expectations
are denoted respectively by $\mathbb{E}_{\xi}$ and $\mathbb{E}$.  We also denote by $\mathbb{P}_k$ and $\mathbb{E}_k$ the corresponding probability and expectation starting with $k$ individuals.  
For $n \in \mathbb{N}$, the probability generating function of $Z_n$ is
\begin{equation}
\label{Gn}
G_n (t) = \mathbb{E} t^{Z_n} = \mathbb{E} \left[ f_0 \circ \ldots \circ f_{n-1} (t) \right] = \mathbb{E} \left[ g_n (t) \right],
\end{equation}
% where $g_0 (t) = t$ and, for $n \geq 1$, 
where $g_n (t) = f_0 \circ \ldots \circ \ f_{n-1} (t)$ is the conditional probability generating function of $Z_n$ when the environment $\xi$ is given.
It follows from \eqref{relation recurrence Zn} that the probability generating function $G_{k,n}$ of $Z_n$ starting with $k$ individuals is
\begin{equation}
\label{Gkn}
G_{k,n} (t) = \mathbb{E}_k t^{Z_n} = \mathbb{E} \left[ g_n^k (t) \right].
\end{equation}
We also
define, for $n\geq 0$,
\begin{equation*}
m_n = m ( \xi_n )= \sum_{i=0}^\infty i p_i ( \xi_n ) \quad \text{and} \ \ \Pi_n = \mathbb{E}_{\xi} Z_n = m_0 ... m_{n-1},
\end{equation*}
where $m_n $ represents the average number of children of an
individual of generation $n$  when the environment $\xi $ is given. Let
\begin{equation} \label{Wn}
W_n =\frac{Z_n}{\Pi_n} , \quad n\geq 0,
\end{equation}
be the normalized population size.
It is well known that under 
$\mathbb{P}_{\xi},$ as well as under $\mathbb{P},$ the sequence 
$(W_n)_{n \geq 0} $ is a non-negative martingale with respect to the filtration
$$\mathcal{F}_n = \sigma
\left(\xi, N_{j,i} , 0 \leq j \leq n-1, i = 1,2 \ldots \right), $$
where by convention $\mathcal{F}_0 = \sigma(\xi)$.
Then the limit $W = \lim_{n\to \infty} W_n $ exists $\mathbb{P}$ - a.s. and $\mathbb{E} W \leq 1 $.

%Another important tool in the study  of a BPRE  is the associated random
%walk
%\begin{equation*}
%S_n = \log \Pi_n = \sum_{i=1}^{n} X_i , \quad n \geq 1, 
%\end{equation*}
%where the r.v.'s  $X_i = \log m_{i-1}$ $(i\geq1)$ are i.i.d.\  depending only on the environment $\xi$.

We shall assume that %For the sake of brevity set %$X=  \log m_0 $,
\begin{equation*}
\mu := \mathbb{E}  \log m_0  \in (0, \infty ), % \quad \text{and} \quad \sigma^2 = \mathbb{E} (X - \mu)^2.
\end{equation*}
which implies that the BPRE is supercritical
and  that 
\begin{equation}
\gamma := \mathbb{P} ( Z_1=1) \in [0,1).
\end{equation}
With the extra condition $\mathbb E |\log (1-p_0(\xi_0))| <\infty$ (see \cite{smith}), 
the population size tends to infinity with positive probability.
%We also assume that the random walk $(S_n)$ is non-degenerate with $0<\sigma^2<\infty$.
%In particular, 
%which excludes the trivial case where $\mathbb{P} ( Z_1 = 1)= 1$.
We also assume in the whole paper that each individual gives birth to at least one child, i.e. 
\begin{equation}
\label{condition p0=0}
p_0(\xi_0) = 0 \quad a.s.
\end{equation}
Therefore, under the condition 
\begin{equation}
\label{CN CV L1 W}
\mathbb{E} \frac{Z_1}{m_0} \log^+ Z_1  < \infty ,
\end{equation}
the martingale $(W_n)$  converges to $W$ in $L^1 (\mathbb{P})$ (see e.g. \cite{tanny1988necessary}) and
\[ \mathbb{P} (W>0) =  \mathbb{P} (Z_n \to \infty) %= \lim_{n \to \infty} \mathbb{P} (Z_n >0) 
=1. \]

Our first result concerns the harmonic moments of the r.v.\ $W$. 
\begin{theorem}
\label{thm harmonic moments W}
Assume 
%that $\p (p_1(\xi_0) <1 ) =1$ and 
that there exists a constant $p>0$ such that $\mathbb{E} \left[ m_0^{p} \right] < \infty $. Then for any $a\in (0,p)$,
\begin{equation*}
\mathbb{E}_k W^{-a} < \infty \quad \text{if and only if} \quad \mathbb{E} \left[ p_1^k (\xi_0) m_0^a \right] <1.
\end{equation*}
\end{theorem}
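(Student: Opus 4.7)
\emph{Necessity.} For the direction $\mathbb{E}_k W^{-a}<\infty \Rightarrow \mathbb{E}[p_1(\xi_0)^k m_0^a]<1$, I would use a one-generation branching decomposition. On the event $A_k=\{N_{0,1}=\cdots=N_{0,k}=1\}$, which has quenched probability $p_1(\xi_0)^k$, the population at generation~$1$ consists of $k$ individuals evolving in the shifted environment $\xi'=(\xi_1,\xi_2,\ldots)$, so that $W=W'/m_0$ with $W'$ distributed, given $\xi'$, as the martingale limit starting from $k$ in $\xi'$. Using the independence of $\xi_0$ and $\xi'$ and splitting over $A_k$ and $A_k^c$ yields
\begin{equation*}
(1-\mathbb{E}[p_1(\xi_0)^k m_0^a])\,\mathbb{E}_k W^{-a} = \mathbb{E}_k[\mathbf{1}_{A_k^c}W^{-a}].
\end{equation*}
Under $p_0(\xi_0)=0$ and non-degeneracy the right-hand side is strictly positive and, if $\mathbb{E}_k W^{-a}<\infty$, finite, which forces $\mathbb{E}[p_1^k m_0^a]<1$.

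\emph{Sufficiency: setup.} For the converse I would follow the Laplace-transform strategy announced in the introduction. Put $\phi_k(t,\xi)=\mathbb{E}_\xi[e^{-tW}\mid Z_0=k]$ and $\phi_k(t)=\mathbb{E}[\phi_k(t,\xi)]$. Conditional independence of the $k$ subtrees gives $\phi_k(t,\xi)=\phi_1(t,\xi)^k$, and the generation-one recursion reads $\phi_1(t,\xi)=f_0(\phi_1(t/m_0,\xi'))$. The target is to show $\phi_k(t)=O(t^{-a})$ for every $a<p$ with $\mathbb{E}[p_1^k m_0^a]<1$, since via $\mathbb{E}_k W^{-a'}=\Gamma(a')^{-1}\int_0^\infty t^{a'-1}\phi_k(t)\,dt$ this implies $\mathbb{E}_k W^{-a'}<\infty$ for all $a'<a$, and then the theorem follows by letting $a$ approach the critical value $a_c(k)$.

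\emph{The two steps.} The first step would produce a rough initial decay $\phi_k(t)=O(t^{-a_0})$ for some $a_0>0$: use the annealed functional equation together with the bound $f_0(s)\leq p_1(\xi_0)s+(1-p_1(\xi_0))s^2$ on $[0,1]$, noting that $\mathbb{E}[p_1(\xi_0)]<1$ and hence $\mathbb{E}[p_1 m_0^{a_0}]<1$ for $a_0$ small. The second step rests on the sharper elementary bound
\begin{equation*}
f_0(s)^k \leq (p_1(\xi_0)s)^k + k(1-p_1(\xi_0))\,s^{k+1} \quad \text{on } [0,1],
\end{equation*}
obtained from the mean-value theorem and $f_0(s)\leq s$. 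Substituting $s=\phi_1(t/m_0,\xi')$, using $\phi_1(\cdot,\xi')^k=\phi_k(\cdot,\xi')$ and $\phi_1(\cdot,\xi')^{k+1}=\phi_{k+1}(\cdot,\xi')$, and taking annealed expectation with $\xi_0$ independent of $\xi'$, this yields the key inequality
\begin{equation*}
\phi_k(t) \leq \mathbb{E}\bigl[p_1(\xi_0)^k\,\phi_k(t/m_0)\bigr] + k\,\mathbb{E}\bigl[(1-p_1(\xi_0))\,\phi_{k+1}(t/m_0)\bigr].
\end{equation*}
Inserting a decay rate for $\phi_{k+1}$ strictly better than the target exponent $a$ makes the second term $O(t^{-a'})$ with $a'>a$, which turns the display into a functional inequation of the type \eqref{basic001} with $q(\xi_0)=p_1(\xi_0)^k$ and $Y=1/m_0$, satisfying $\mathbb{E}[qY^{-a}]=\mathbb{E}[p_1^k m_0^a]<1$. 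The iteration argument of \cite[Lemma~4.1]{liu1999asymptotic} then delivers $\phi_k(t)=O(t^{-a})$.

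\emph{Main obstacle.} The principal difficulty is producing the required decay for $\phi_{k+1}$ without already knowing the theorem at level $k+1$. This is where the introduction's phrase ``choosing $k$ large enough'' becomes essential: the critical exponent $a_c(k)$ defined by $\mathbb{E}[p_1^k m_0^{a_c}]=1$ is nondecreasing in $k$ and, because $p_1(\xi_0)^k\to 0$ a.s., approaches the moment threshold $p$ as $k\to\infty$. I would therefore proceed by downward induction on $k$: first treat very large $k$, where Step~1 alone already yields $\phi_k(t)=O(t^{-a'})$ for $a'$ arbitrarily close to $p$ (invoking $\mathbb{E} m_0^p<\infty$), and then propagate the decay downward one index at a time via the boxed inequation combined with Liu's lemma. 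Checking that the remainder constants remain finite under the single moment condition $\mathbb{E} m_0^p<\infty$, and that Liu's iteration lemma extends cleanly to the random scaling $1/m_0$, are the technical points that require the most care.
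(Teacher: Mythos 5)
Your proposal follows essentially the same route as the paper: the necessity via the one–generation decomposition on $\{Z_1=k\}$ is exactly the paper's argument, your boxed inequality is (up to replacing $1-p_1^k(\xi_0)$ by the larger $k(1-p_1(\xi_0))$) the paper's \eqref{eq relation phi xi 3}--\eqref{eq relation phi xi 4}, and the plan ``anchor a strong a priori decay at a large initial size, then propagate downward with Lemma \ref{lem liu}'' is precisely the paper's proof in the case $\mathbb{P}(p_1(\xi_0)<1)=1$. Your final technical worry is moot: Lemma \ref{lem liu} as stated already allows a random $Y$; writing $\mathbb{E}\bigl[p_1^k(\xi_0)\phi_k(t/m_0)\bigr]=\gamma_k\,\mathbb{E}\phi_k(Yt)$ with $Y$ the $p_1^k/\gamma_k$-tilted version of $1/m_0$, the hypothesis $q\,\mathbb{E}Y^{-a}<1$ is exactly $\mathbb{E}[p_1^k(\xi_0)m_0^a]<1$. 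Also, your ``let $a$ approach the critical value'' step is fine once phrased as: for the given $a<p$ with $\mathbb{E}[p_1^km_0^a]<1$, pick by dominated convergence some $a+\varepsilon<p$ with $\mathbb{E}[p_1^km_0^{a+\varepsilon}]<1$ and prove $\phi_k(t)=O(t^{-(a+\varepsilon)})$.

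There is, however, a genuine gap: your argument only covers $\mathbb{P}(p_1(\xi_0)<1)=1$. The anchor of your downward induction rests on the claim that $p_1(\xi_0)^k\to 0$ a.s., so that the a priori decay exponent attainable at large initial size approaches $p$. The theorem allows $\eta:=\mathbb{P}(p_1(\xi_0)=1)>0$ (supercriticality only forces $\eta<1$, and $m_0=1$ on this event), in which case $p_1^k(\xi_0)\to\mathds{1}\{p_1(\xi_0)=1\}$ and $\gamma_K\to\eta>0$. The truncation/iteration bound behind your Step~1 (the paper's Lemma \ref{lem ak min}) then saturates at the exponent $p/\bigl(1-\log\mathbb{E}m_0^{p}/\log\eta\bigr)<p$: the crude bound $\phi_K(t)\leq\beta^n+\mathbb{P}(\Pi_n\geq A^n)$ cannot exploit the fact that on the ``lazy'' environments responsible for the $\gamma_K^n$ term one also has $\Pi_n$ small, so no level $K$ yields a decay exponent beyond this cap, and your induction cannot even start for target exponents $a$ between the cap and $p$, although $\mathbb{E}[p_1^km_0^a]<1$ may well hold there (e.g.\ it tends to $\eta<1$ for every $a\leq p$ as $k\to\infty$). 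The paper needs a separate reduction for this case: replace the environment law by $\tilde\tau_0=\tau_0(\cdot\mid p_1(\xi_0)<1)$, prove $\mathbb{E}_kW^{-a}\leq\tilde{\mathbb{E}}_kW^{-a}$ by decomposing over the number of generations with $p_1(\xi_j)=1$ (on which $W_n$ does not move, via $W_n=\prod_i\eta_i$), and use $\mathbb{E}[p_1^km_0^a]=(1-\eta)\tilde{\mathbb{E}}[p_1^km_0^a]+\eta$ to transfer the hypothesis. Without this step (or a substitute), your proof establishes sufficiency only under the extra assumption $\mathbb{P}(p_1(\xi_0)<1)=1$.
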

From Theorem \ref{thm harmonic moments W} we get the following corollary.
\begin{corollary}
\label{cor harmonic moments W}
Let $a_k>0$ be the solution of the equation 
\begin{equation}
\label{eq moment harmonique critique ak}
\e [ p_1^k m_0^{a_k} ] =1 .
\end{equation}
Assume 
%that $\p (p_1(\xi_0) <1 ) =1$ and 
that $\mathbb{E}m_0^{a_k} < \infty$. Then,
\begin{equation*}
\left\{ 
\begin{array}{l}
 \mathbb{E}_k W^{-a} < \infty \quad \text{for}\quad  a \in [0, a_k), \\
 \mathbb{E}_k W^{-a} =\infty \quad \text{for}\quad  a \in [a_k, \infty). 
\end{array}
\right.
\end{equation*}
\end{corollary}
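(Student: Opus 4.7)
My plan is to deduce Corollary \ref{cor harmonic moments W} from Theorem \ref{thm harmonic moments W} by analyzing the function $\psi(a) := \mathbb{E}\bigl[p_1^k(\xi_0) m_0^a\bigr]$ on $[0,\infty)$. Since \eqref{condition p0=0} gives $m_0 = \sum_{i \geq 1} i\, p_i(\xi_0) \geq 1$ a.s., the map $a \mapsto m_0^a$ is non-decreasing and hence so is $\psi$; moreover $\psi$ is convex because $a \mapsto e^{a \log m_0}$ is convex. Noting that $\psi(0) = \mathbb{E}[p_1^k(\xi_0)] \leq \gamma < 1$ and $\psi(a_k) = 1$, a convex combination argument yields $\psi(a) < 1$ on $[0, a_k)$, while the fact that the secant slope of $\psi$ on $[0,a_k]$ is strictly positive, together with monotonicity of slopes of a convex function, gives $\psi(a) > 1$ on $(a_k,\infty)$; in particular $a_k$ is uniquely determined.

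For $a \in [0, a_k)$, I would apply Theorem \ref{thm harmonic moments W} with $p = a_k$: the hypothesis $\mathbb{E} m_0^{a_k} < \infty$ ensures $p > a$, and $\psi(a) < 1$ then yields $\mathbb{E}_k W^{-a} < \infty$. For $a \in (a_k,\infty)$, Theorem \ref{thm harmonic moments W} cannot be applied directly because the constraint $a<p$ may fail. Instead I would argue from first principles using the branching recursion $W = m_0^{-1} \sum_{i=1}^{k} \sum_{j=1}^{N_{0,i}} W^{(i,j)}$ under $\mathbb{P}_k$, where the $W^{(i,j)}$ are i.i.d.\ copies of $W$ under the shifted environment. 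Restricting to the event $\{N_{0,i} = 1 \text{ for all } i = 1, \ldots, k\}$ (whose quenched probability is $p_1^k(\xi_0)$) and using independence between $\xi_0$ and the shifted environment $(\xi_1,\xi_2,\ldots)$ gives the inequality $\mathbb{E}_k W^{-a} \geq \psi(a)\, \mathbb{E}_k W^{-a}$; together with $\psi(a)>1$ and $\mathbb{E}_k W^{-a} > 0$ (because $W<\infty$ a.s.), this forces $\mathbb{E}_k W^{-a} = \infty$.

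The delicate case is $a = a_k$, where the previous inequality degenerates into the tautology $h_k \geq h_k$. Here I would unpack the recursion one step further by writing $h_n := \mathbb{E}_n W^{-a_k}$ and decomposing over all offspring configurations of the first generation:
\[
h_k = \psi(a_k)\, h_k + \mathbb{E}\Bigl[m_0^{a_k} \sum_{(n_1, \ldots, n_k) \neq (1, \ldots, 1)} p_{n_1}(\xi_0) \cdots p_{n_k}(\xi_0)\, h_{n_1 + \cdots + n_k}\Bigr].
\]
Since $\gamma < 1$ implies $\mathbb{P}(p_1(\xi_0) < 1) > 0$, the residual sum is strictly positive (using $h_N > 0$ for all $N \geq k$, which follows from $W<\infty$ a.s.); assuming $h_k<\infty$ and using $\psi(a_k) = 1$ then yields $h_k > h_k$, a contradiction. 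Hence $\mathbb{E}_k W^{-a_k} = \infty$, and this extends to all $a > a_k$ via the bound $\mathbb{E}_k W^{-a}\geq \mathbb{E}_k W^{-a_k}\mathbf{1}_{\{W\leq 1\}}$. The main obstacle is precisely this boundary case $a=a_k$, which lies outside the scope of Theorem \ref{thm harmonic moments W} and is the reason the one-step inequality $\mathbb{E}_k W^{-a} \geq \psi(a)\, \mathbb{E}_k W^{-a}$ alone fails at the critical exponent.
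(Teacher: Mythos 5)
Your proposal is correct and takes essentially the same route as the paper: the finiteness part is the intended application of Theorem \ref{thm harmonic moments W} with $p=a_k$, and your divergence argument (the one-step decomposition of $W$ restricted to the event $\{Z_1=k\}$, with the strictly positive remainder coming from $\mathbb{P}_k(Z_1\geq k+1)>0$) is exactly the paper's proof of the necessity direction, which never uses the moment condition and therefore covers all $a\geq a_k$, including the critical case $a=a_k$. Your explicit treatment of $a=a_k$ is just a spelled-out version of that same strict-inequality step, and your observation that the literal statement of the theorem (restricted to $a<p$) does not cover $a\geq a_k$ is a fair point that the paper leaves implicit.
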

The solution $a_k$ of the equation  \eqref{eq moment harmonique critique ak}
is the critical value for the existence of harmonic moments of the r.v.\ $W$. Note that, when the process starts with one individual, 
the critical value $a_1$ for the harmonic moments of $W$
%solution of the equation $\e [ p_1 m_0^{a_1} ] =1$, 
has been found in Theorem 1.4 of \cite{liu} under the more restrictive condition
\begin{equation}
\label{condition H}
 A_1 \leq m_0 \quad \text{and} \quad
\sum_{i=1}^\infty i^{1+ \delta} p_i (\xi_0)  \leq A^{1+ \delta} \quad a.s.,
\end{equation}
where $\delta>0$ and $1<A_1 <A$ are some constants. 
Theorem \ref{thm harmonic moments W} and Corollary \ref{cor harmonic moments W} 
generalize the result of \cite{liu}, 
in the sense that
we consider $k$ initial individuals rather than just one and that 
the boundedness condition \eqref{condition H}
is relaxed to the simple moment condition $\mathbb{E} \left[ m_0^{p} \right] < \infty $.

The next result gives an equivalent as $n \to \infty$ of the probability $\mathbb{P}_k \l Z_n = j \r = \mathbb{P} \l Z_n = j | Z_0=k \r$, with $k \in \mathbb{N}^*$ and $j \geq k,$ in the case when  $\p(Z_1=1)>0$. 
The last condition implies that, 
% in the case when there is no extinction, i.e. $\p (Z_1=0)=0$.  
for $k \geq 1$, 
\begin{equation}
\label{eq gamma_k}
\gamma_k = \mathbb{P}_k (Z_1=k) = \mathbb{E} [ p_1^k (\xi_0) ] >0.
\end{equation}
Define $r_k$ as the solution of the equation
\begin{equation}
\label{eq rk}
\gamma_k = \e m_0^{-r_k}.
\end{equation}
\begin{theorem} Assume that $\p(Z_1=1)>0$. For any $k\geq 1$  the following assertions holds.
\begin{enumerate}[ref=\arabic*, leftmargin=*, label=\arabic*.]
\item[a)] \label{thm small value probability 2}
 For any accessible state $ j \geq k$ in the sense that $\mathbb{P}_k (Z_l=j)>0$ for some $l \geq 0$, we have  
\begin{equation}
\label{small value asymptotic 2}
\mathbb{P}_k \left( Z_n = j \right) \underset{n \to \infty}{\sim} \gamma_k^n q_{k,j},
\end{equation}
where $q_{k,k}= 1$ and, for  $j>k$,  $q_{k,j} \in (0, + \infty )$ is the solution of the recurrence relation
\begin{equation}
\label{relation rec qkj}
\gamma_k q_{k,j} = \sum_{i=k}^j p(i, j) q_{k, i},
\end{equation}
with $q_{k,i}=0$ for any non-accessible state $i$, i.e.\  $\mathbb{P}_k (Z_l=i)=0$ for all $l \geq 0$.

\item[b)] Assume 
%that $\p (p_1(\xi_0) <1 ) =1$ and 
that  there exists $\varepsilon>0$ such that $\mathbb{E} [ m_0^{r_k+ \varepsilon} ] < \infty $. Then, for any $r>r_k$, we have $ \sum_{j=k}^{\infty} j^{-r} q_{k,j} < \infty$. In particular the radius of convergence of the power series
\begin{equation}
Q_k (t) = \sum_{j=k}^{+ \infty} q_{k,j} t^j
\end{equation}
is equal to 1.  

\item[c)] For all $t \in [0, 1)$ and $k \geq 1$, we have,
\begin{equation}
\label{cv Qnk ->Qk}
 \frac{G_{k, n} (t)}{\gamma^n_k} \uparrow Q_k(t) \ \ \text{as} \ \ n \to \infty,
\end{equation}
where $G_{k,n}$ is the probability generating function of $Z_n$ when $Z_0=k$, defined in \eqref{Gkn}.

\item[d)] $Q_k (t)$ is the unique power series which verifies the functional equation
\begin{equation}
\label{relation Q_k}
\gamma_k Q_k (t) = \mathbb{E} \left[ Q_k ( f_0 (t) ) \right], \ \ t\in [0,1),
\end{equation}
with the condition $Q_k^{(k)} (0) = 1.$
\end{enumerate}
\end{theorem}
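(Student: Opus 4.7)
The plan is to treat the four parts in the order (a), (c), (b), (d), using the annealed Markov property of $(Z_n)$ (valid because the environments $\xi_j$ are i.i.d.) as the main structural tool. For (a), I would first write the two one-step Markov decompositions, with $p(i,j)=\mathbb{P}_i(Z_1=j)$ and $p(i,j)=0$ for $j<i$ since $p_0(\xi_0)=0$ forces $(Z_n)$ to be non-decreasing:
\[
\mathbb{P}_k(Z_{n+1}=j)=\sum_{i=k}^{j}\mathbb{P}_k(Z_n=i)\,p(i,j)=\sum_{i\geq k}p(k,i)\,\mathbb{P}_i(Z_n=j).
\]
Keeping only the $i=k$ term of the second form gives $\mathbb{P}_k(Z_{n+1}=j)\geq \gamma_k\,\mathbb{P}_k(Z_n=j)$, so $u_{k,n,j}:=\mathbb{P}_k(Z_n=j)/\gamma_k^n$ is non-decreasing in $n$, with $u_{k,n,k}\equiv 1$. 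Isolating $i=j$ in the first decomposition and dividing by $\gamma_k^{n+1}$ produces the scalar contractive recursion
\[
u_{k,n+1,j}=\frac{\gamma_j}{\gamma_k}\,u_{k,n,j}+\frac{1}{\gamma_k}\sum_{i=k}^{j-1}u_{k,n,i}\,p(i,j),
\]
whose leading coefficient is strictly less than $1$ for $j>k$ under the mild non-degeneracy $\mathbb{P}(0<p_1(\xi_0)<1)>0$. An induction on $j\geq k$ then identifies the limit $u_{k,n,j}\to q_{k,j}\in[0,\infty)$ solving the recurrence, with $q_{k,j}>0$ at accessible states by monotonicity. Part (c) is immediate from monotone convergence applied to $G_{k,n}(t)/\gamma_k^n=\sum_{j\geq k}u_{k,n,j}\,t^j$.

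The heart of the theorem is part (b). I would control $S_n(r):=\mathbb{E}_k Z_n^{-r}/\gamma_k^n$ uniformly in $n$ by summing the recursion for $u_{k,n,j}$ against $j^{-r}$:
\[
\gamma_k\,S_{n+1}(r)=\sum_{i\geq k}u_{k,n,i}\,\mathbb{E}_i Z_1^{-r}.
\]
Under $\mathbb{P}_i$, $Z_1=N_{0,1}+\cdots+N_{0,i}$ is a sum of $i$ i.i.d.\ (given $\xi_0$) variables of conditional mean $m_0\geq 1$ with each $N_{0,l}\geq 1$; the quenched strong law and bounded convergence (since $(Z_1/i)^{-r}\leq 1$), followed by annealed dominated convergence, yield $i^{r}\,\mathbb{E}_i Z_1^{-r}\to \mathbb{E}\,m_0^{-r}$ as $i\to\infty$. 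For $r>r_k$ one has $\mathbb{E}\,m_0^{-r}<\gamma_k$ (the map $r\mapsto\mathbb{E}\,m_0^{-r}$ is strictly decreasing because $m_0\geq 1$ is non-degenerate), so one fixes $\delta>0$ and $I_0$ with $\mathbb{E}_i Z_1^{-r}\leq(\gamma_k-\delta)\,i^{-r}$ for $i\geq I_0$; for $i<I_0$ the trivial bound $\mathbb{E}_i Z_1^{-r}\leq i^{-r}$ together with $u_{k,n,i}\leq q_{k,i}<\infty$ (from (a)) contributes a finite constant $C$. This yields
\[
\gamma_k\,S_{n+1}(r)\leq C+(\gamma_k-\delta)\,S_n(r),
\]
which iterates to $\sup_n S_n(r)<\infty$. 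Monotone convergence upgrades this to $\sum_{j\geq k}j^{-r}q_{k,j}<\infty$, so the radius of $Q_k$ is $\geq 1$. The matching bound $\leq 1$ comes from (c): $Q_k(t)\geq G_{k,N}(t)/\gamma_k^N\to 1/\gamma_k^N$ as $t\to 1^-$ for each fixed $N$, hence $Q_k(1^-)=\infty$.

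For (d), I would use the identity $g_{n+1}(t)=g_n(f_n(t))$ together with the independence of $\xi_n$ from $(\xi_0,\dots,\xi_{n-1})$ to derive
\[
G_{k,n+1}(t)=\mathbb{E}\bigl[G_{k,n}(f_0(t))\bigr],\qquad t\in[0,1].
\]
Dividing by $\gamma_k^{n+1}$, invoking (c), and exchanging limit with expectation by monotone convergence yield $\gamma_k\,Q_k(t)=\mathbb{E}[Q_k(f_0(t))]$. Uniqueness is a coefficient-matching argument: for any power-series solution $\tilde Q_k(t)=\sum_{j\geq k}\tilde q_{k,j}\,t^j$ one uses $\mathbb{E}[f_0(t)^j]=\sum_l p(j,l)\,t^l$ and matches coefficients of $t^l$, recovering exactly the recurrence of (a), so the normalization fixing $\tilde q_{k,k}$ forces $\tilde q_{k,j}=q_{k,j}$ for all $j$ by induction. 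The main obstacle throughout is the contraction step in (b): turning the pointwise asymptotic $\mathbb{E}_i Z_1^{-r}\sim i^{-r}\mathbb{E}\,m_0^{-r}$ into a genuine uniform-in-$n$ contraction for $S_n(r)$ crucially relies on the pointwise-in-$i$ boundedness $u_{k,n,i}\leq q_{k,i}$ supplied by (a).
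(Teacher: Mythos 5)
Your proposal is correct in substance, but for parts b) and d) it takes a genuinely different route from the paper. Parts a) and c) coincide with the paper's argument (monotone ratio $u_{k,n,j}=\mathbb{P}_k(Z_n=j)/\gamma_k^n$, the one-step recursion isolating the diagonal term $\gamma_j$, and monotone convergence for the generating functions); note that the strict inequality $\gamma_j<\gamma_k$ for $j>k$, i.e.\ your non-degeneracy $\mathbb{P}(0<p_1(\xi_0)<1)>0$, is also what the paper uses tacitly in the bound $\sup_n a_{k,n+1}(j)(\gamma_k-\gamma_j)\leq\sum_{i<j}p(i,j)a_k(i)$, so you are not assuming more than they do. For b), the paper proves Lemma~4.1 by the branching decomposition $Z_{n+1}=\sum_{i=1}^{Z_1}Z_{n,i}^{(1)}$, giving $\mathbb{E}_kZ_{n+1}^{-r}\leq\gamma_k\mathbb{E}_kZ_n^{-r}+\mathbb{E}_{k+1}Z_n^{-r}$, and then controls $\mathbb{E}_{k+1}Z_n^{-r}$ by the change of measure $\mathbb{E}^{(r)}[\,\cdot\,]=\mathbb{E}[\Pi_n^{-r}\,\cdot\,]/c_r^n$ together with Theorem~2.1 on harmonic moments of $W$ under the tilted law; this is exactly where their hypothesis $\mathbb{E}[m_0^{r_k+\varepsilon}]<\infty$ enters. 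Your contraction $\gamma_kS_{n+1}(r)\leq C+(\gamma_k-\delta)S_n(r)$, based on the one-step identity $\gamma_kS_{n+1}(r)=\sum_i u_{k,n,i}\mathbb{E}_iZ_1^{-r}$ and the law-of-large-numbers estimate $i^r\mathbb{E}_iZ_1^{-r}\to\mathbb{E}m_0^{-r}<\gamma_k$ (legitimate since $Z_1\geq i$ under $\mathbb{P}_i$ makes $(Z_1/i)^{-r}\leq1$), is more elementary: it bypasses the change of measure and Theorem~2.1 entirely and in fact never uses the moment hypothesis, so you get the conclusion under weaker assumptions. Likewise for d): the paper kills the low-order coefficients via Fa\`a di Bruno and then proves uniqueness by iterating the functional equation and dominating $\gamma_k^{-n}G_{j,n}(t)\leq(\gamma_j/\gamma_k)^nQ_j(t)$ with the bound $Q_j(t)\leq C_rj^rt^j$ from b), whereas your coefficient matching (using $(\gamma_k-\gamma_l)\tilde q_l=\sum_{j<l}\tilde q_jp(j,l)$) needs neither b) nor the domination argument. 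Two small points you should add to make d) complete: first, the theorem's normalization is only $Q_k^{(k)}(0)=1$, so a competing solution could a priori have nonzero coefficients of order $<k$; your matching handles these too (for $l<k$ one gets $(\gamma_k-\gamma_l)\tilde q_l=\sum_{j<l}\tilde q_jp(j,l)$ with $\gamma_l>\gamma_k$, forcing $\tilde q_l=0$), but you must say so rather than restrict to series starting at $t^k$. Second, since a general solution may have coefficients of either sign, the interchange $\mathbb{E}\sum_j\tilde q_jf_0(t)^j=\sum_j\tilde q_j\mathbb{E}f_0(t)^j$ needs the absolute bound $\sum_j|\tilde q_j|f_0(t)^j\leq\sum_j|\tilde q_j|t^j<\infty$, which is available because $p_0(\xi_0)=0$ gives $f_0(t)\leq t$; spell this out and the uniqueness step is airtight.
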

Part a)  improves the bound 
$\mathbb{P} \left( Z_n \leq j \right) \leq n^j \gamma^n $ obtained in \cite{bansaye2009large} (Lemma 7) for a BPRE with $\mathbb{P}(Z_1=0)=0$. 
Furthermore, Theorem \ref{thm small value probability 2} extends the results of \cite{athreya} for the Galton-Watson process, with some significant differences. Indeed, 
when the environment is random and non-degenerate, we have, for $k\geq 2,$ 
$ G_{k,1} (t) = \mathbb{E} f_0^k (t) \neq G_1^k (t)$  in general, which implies that $Q_k (t) \neq Q^k (t)$, whereas we have the relation $Q_k (t) = Q^k (t)$ for the Galton-Watson process.

Theorem \ref{thm small value probability 2} also improves the results of \cite{bansaye2014small} (Theorem 2.1), where it has been proved that for a general supercritical BPRE
\begin{equation}
\label{const-rho-001}
\lim_{n\to\infty}\frac{1}{n} \log \mathbb{P}_k \left( Z_n = j \right) = -\rho <0.
\end{equation}
Our result is sharper in the  case where $\mathbb{P} \left( Z_1 = 0 \right) =0$. Moreover, in the  case where $\mathbb{P} \left( Z_1 = 0 \right) =0$, it has been stated mistakenly in \cite{bansaye2014small} that 
$\lim_{n\to\infty} \frac{1}{n} \log \mathbb{P}_k \left( Z_n = j \right) =  k \log \gamma$, whereas the correct asymptotic is 
$$\lim_{n\to\infty}\frac{1}{n} \log \mathbb{P}_k \left( Z_n = j \right) = \log \gamma_k.$$

Now we discuss the particular fractional linear case. The reproduction law of a BPRE is  said to be fractional linear if 
\begin{equation}
\label{loi linéaire fractionnaire}
p_0 (\xi_0) = a_0, \quad p_k (\xi_0) = \frac{(1-a_0)(1-b_0)}{b_0} b_0^k,
\end{equation}
with generating function $f_0$ given by
\begin{equation*}
\label{eq f cas lineraire fractionnaire}
f_0(t) = a_0 + \frac{(1-a_0)(1-b_0)t}{1-b_0t},
\end{equation*}
where $a_0\in [0,1),$ $b_0 \in  (0,1)$, with $a_0+b_0 \leq 1 $, are random variables depending on the environment $\xi_0$. 
In this case, the mean of the offspring distribution is given by
\begin{equation*}
\label{m_0 lineraire fractionnaire}
m_0 = \frac{1-a_0}{1-b_0}.
\end{equation*}
The constant $\rho$ in \eqref{const-rho-001} was computed in \cite{bansaye2014small}: with $X= \log m_0,$
\begin{equation*}
\label{rho cas LF}
\rho = \left\{ 
\begin{array}{l l l l}
  - \log \e [ e^{-X} ]  & \text{if}& \ \e [ X e^{-X} ] \geq 0 & (\text{intermediately and}\\
  &&& \ \ \text{strongly supercritical case}), \\
  - \log \inf_{\lambda \geq 0} \e [ e^{-X} ]  & \text{if}& \ \e [ X e^{-X} ] < 0 & (\text{weakly supercritical case)}.
\end{array}
\right.
\end{equation*}
Moreover, precise asymptotic results for the strongly and intermediately supercritical case can be found in \cite{boinghoff2014limit}, 
where the following assertions are proved:
\begin{enumerate}
\item if $\e [X e^{-X} ] > 0$ (strongly supercritical case),
\[
\p (Z_n=1) \sim  \nu  \left( \e [e^{-X} ] \right)^n;
\]
\item if $\e [X e^{-X} ] = 0$ (intermediately supercritical case), 
\[
\p (Z_n=1) \sim  \theta  \left( \e [e^{-X} ] \right)^n l(n) n^{-(1-s)},
\]
\end{enumerate}
with $\theta$, $\nu$, $s$ positive constants and $l(\cdot)$ a slowly varying function. 
In the particular case where $a_0=0$, 
Theorem \ref{thm small value probability 2} recovers Theorem 2.1.1 of \cite{boinghoff2014limit} 
with %since in this case, we have 
$p_1 (\xi_0) = 1/m_0$, $X= \log m_0 >0$ and $\e \left[X e^{-X} \right] >0$. Therefore the process is strongly supercritical and
$ 
\p (Z_n=1) \sim  \nu  \left( \e [e^{-X} ] \right)^n =  \gamma^n.
$
However,
since we assume $\p(Z_1=0)=0$, our result does not highlight the previous two asymptotic regimes stated in the particular case when the distribution is fractional linear.  The study of the  general case is a challenging problem which still remains open.
%Thus starting with $Z_0=k$, the critical values for the harmonic moments of $W_n$ is $a_k=k$.

\section{Harmonic moments of $W$}
\label{sec harmonic moments W}

In this section we prove Theorem \ref{thm harmonic moments W}. 
Denote the quenched Laplace transform of $W$ under the environment $\xi$ by
\begin{equation}
\label{quenched laplace Wn W}
\phi_{\xi} (t) = \mathbb{E}_{\xi} \left[ e^{-t W} \right],
\end{equation}
and the annealed Laplace transform of $W$ starting with $k$ individuals by
\begin{equation}
\label{annealed laplace Wn W}
\phi_{k} (t) = \mathbb{E}_{k} \left[\phi_{\xi} (t) \right] = \mathbb{E} \left[\phi_{\xi}^k (t) \right] = \mathbb{E}_{k} \left[ e^{-t W} \right].
\end{equation}
We start with a lemma which gives a lower bound for the harmonic moment of $W$.
\begin{lemma}
\label{lem ak min}
Assume that $\e \left[ m_0^{p} \right] < \infty$ for some constant $p>0$. For any $k \geq 1$, let
\begin{equation}
\label{eq ak min}
\alpha_k =  \frac{ p  }{ 1 - \log \mathbb{E} m_0^{p} / \log \gamma_k },
\end{equation}
with the convention that $\alpha_k=p$ if  $p_1 (\xi_0)=0$ a.s. (so that $\gamma_k=0$).
Then, for all $a \in (0, \alpha_k)$, 
\[ \mathbb{E}_k W^{-a} < \infty . \]
Furthermore,  if $\p(p_1 (\xi_0)=0)<1$, we have $\alpha_k < \alpha_{k+1}$ ; if additionally 
$\p \left( p_1(\xi_0)<1 \right)=1$, then $\lim_{k \to \infty} \alpha_k = p.$
\end{lemma}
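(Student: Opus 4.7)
My plan is to work with the annealed Laplace transform $\phi_k(t) = \mathbb{E}_k[e^{-tW}] = \mathbb{E}[\phi_\xi(t)^k]$ and convert a suitable bound into a harmonic moment via $\mathbb{E}_k W^{-a} = \Gamma(a)^{-1}\int_0^\infty t^{a-1}\phi_k(t)\,dt$. The starting point is the quenched functional equation $\phi_\xi(t) = f_0(\phi_{T\xi}(t/m_0))$; raising to the $k$-th power and using that $p_0(\xi_0)=0$ forces $Z_1 \geq k$ starting from $k$ individuals, so that the coefficient of $s^k$ in $f_0(s)^k$ equals $p_1(\xi_0)^k$, I obtain for $s\in[0,1]$ the polynomial inequality
$$f_0(s)^k \leq p_1(\xi_0)^k\, s^k + s^{k+1}.$$
Substituting $s = \phi_{T\xi}(t/m_0)$, taking annealed expectations, and using the independence of $\xi_0$ and $T\xi$ together with stationarity yields
$$\phi_k(t) \leq \mathbb{E}\bigl[p_1^k(\xi_0)\,\phi_k(t/m_0)\bigr] + \mathbb{E}\bigl[\phi_{k+1}(t/m_0)\bigr].$$

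Integrating this against $t^{a-1}$ on $(0,\infty)$ and applying Fubini with the substitution $u = t/m_0$ produces the clean linear recurrence
$$I_a^{(k)} \leq \mathbb{E}[p_1^k m_0^a]\,I_a^{(k)} + \mathbb{E}[m_0^a]\,I_a^{(k+1)}, \qquad I_a^{(k)} := \Gamma(a)\,\mathbb{E}_k W^{-a}.$$
A Hölder computation with exponents $p/a$ and $p/(p-a)$, together with $p_1(\xi_0) \leq 1$, gives $\mathbb{E}[p_1^k m_0^a] \leq \gamma_k^{(p-a)/p}(\mathbb{E} m_0^p)^{a/p}$, whose right-hand side is strictly less than $1$ precisely when $a<\alpha_k$. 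Hence $I_a^{(k)} \leq C\,I_a^{(k+1)}$ for every $a \in (0,\alpha_k)$, with $C = \mathbb{E}[m_0^a]/(1-\mathbb{E}[p_1^k m_0^a])$ finite.

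The main obstacle is closing the recurrence, since it propagates finiteness of $I_a^{(k)}$ from $k+1$: one needs a base case $I_a^{(K)} < \infty$ for some $K$. I would supply this by iterating the pointwise bound $g_n(s)^k \leq \prod_{j=0}^{n-1}p_1(\xi_j)^k\, s^k + s^{k+1}$ with $s = \phi_{T^n\xi}(t/\Pi_n)$, getting $\phi_K(t) \leq \gamma_K^n + \mathbb{E}[\phi_{K+1}(t/\Pi_n)]$; the residual is controlled by monotonicity of $\phi_{K+1}$ and Markov's inequality with the $p$-th moment $\mathbb{E}\Pi_n^p = (\mathbb{E} m_0^p)^n$, giving $\mathbb{E}[\phi_{K+1}(t/\Pi_n)] \leq \phi_{K+1}(U) + (U/t)^p(\mathbb{E} m_0^p)^n$. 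Optimizing $n \sim c\log t$ and the threshold $U = t^\delta$ in the split of the residual yields $\phi_K(t) = O(t^{-A})$ for every $A < (1-\delta)\alpha_K$, and choosing $K$ large enough so that $\alpha_K$ exceeds $a$ (possible since $\alpha_K \uparrow p$, as shown below) delivers $I_a^{(K)} < \infty$, whence the propagation closes.

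Finally, the monotonicity $\alpha_k<\alpha_{k+1}$ follows from the explicit formula and the strict decay of $\gamma_k = \mathbb{E} p_1(\xi_0)^k$ (equivalently the strict growth of $|\log\gamma_k|$), which holds as soon as $\mathbb{P}(p_1(\xi_0)\in(0,1))>0$; and when $\mathbb{P}(p_1(\xi_0)<1)=1$, dominated convergence gives $\gamma_k \to 0$, so $|\log\gamma_k|\to\infty$ and $\log\mathbb{E} m_0^p/|\log\gamma_k|\to 0$, yielding $\alpha_k\to p$.
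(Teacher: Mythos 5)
Your propagation step is sound in spirit, and your H\"older computation showing that $\mathbb{E}[p_1^k(\xi_0) m_0^a]<1$ exactly when $a<\alpha_k$ is correct and matches the paper's threshold. But there is a genuine gap precisely where the lemma's real content lies: the base case. Your bound for $\phi_K$ is obtained by iterating $f_0(s)^k\le p_1^k(\xi_0)s^k+s^{k+1}$ (note that the iteration accumulates a remainder bounded by $n\,s^{k+1}$, not $s^{k+1}$ -- a minor point), and the residual is split as $\mathbb{E}[\phi_{K+1}(t/\Pi_n)]\le\phi_{K+1}(U)+(U/t)^p(\mathbb{E} m_0^p)^n$ with $U=t^\delta$. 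To conclude $\phi_K(t)=O(t^{-A})$ you would need $\phi_{K+1}(t^\delta)$ to decay polynomially in $t$, i.e.\ a quantitative decay rate for $\phi_{K+1}$ -- which is exactly the type of statement the lemma is meant to establish, and which cannot be extracted from the qualitative fact $\phi_{K+1}(u)\to0$ alone. So the base case is circular; and since your recurrence $I_a^{(k)}\le C\,I_a^{(k+1)}$ applies for every $K\ge k$ (because $\alpha_K\ge\alpha_k>a$ automatically), the recursion buys nothing: all the difficulty is concentrated in that one step. A smaller, fixable issue: from $I_a^{(k)}\le q\,I_a^{(k)}+c\,I_a^{(k+1)}$ with $q<1$ you cannot deduce $I_a^{(k)}\le c\,I_a^{(k+1)}/(1-q)$ unless $I_a^{(k)}$ is already known to be finite; one should work with the truncated integrals $\int_0^T t^{a-1}\phi_k(t)\,dt$ (finite since $\phi_k\le1$), use $m_0\ge1$ so the substitution keeps the range inside $(0,T)$, and let $T\to\infty$.

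The paper closes exactly this gap by a different decomposition, which is worth comparing with yours: instead of splitting off an additive remainder $s^{k+1}$, it keeps the estimate multiplicative,
$\phi_\xi^k(t)\le\phi_{T^n\xi}^k\left(\frac{t}{\Pi_n}\right)\prod_{j=0}^{n-1}\left(p_1^k(\xi_j)+(1-p_1^k(\xi_j))\,\phi_{T^n\xi}\left(\frac{t}{\Pi_n}\right)\right)$.
After discarding the event $\{\Pi_n\ge A^n\}$ by Markov's inequality with the $p$-th moment, the only information needed is the qualitative bound $\phi(u)\le\delta$ for $u\ge K$; this smallness enters each of the $n$ factors and yields $(\gamma_k+(1-\gamma_k)\delta)^n$, exponentially small in $n$, rather than a single additive term with no decay. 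Choosing $n\asymp\log t$ then gives $\phi_k(t)\le Ct^{-\alpha}$ with $\alpha\to\alpha_k$ as $\delta\to0$, which is the whole lemma; no recursion over the number of ancestors is needed at this stage (that recursion, essentially your integrated recurrence in pointwise form, appears only later in the proof of Theorem \ref{thm harmonic moments W}, where it is handled via Lemma \ref{lem liu}). If you want to salvage your scheme, you must replace your base-case argument by this multiplicative one (or an equivalent device that converts $\phi\le\delta$ into an exponent in $n$).
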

\begin{proof} 
We use the same approach as in \cite{glm2016berry} where the case $k=1$ was treated.
Since $W$ is a positive random variable, it can be easily seen that, for $\alpha >0$,
\begin{equation}
\label{moment harmonique et Laplace}
\e_k W^{- \alpha} = \frac{1}{\Gamma (\alpha)} \int_0^{+ \infty} \phi_k (t)  t^{\alpha-1} dt,
\end{equation}
where $\Gamma(\alpha )=\int_0^\infty t^{\alpha-1}e^{-t}dt$ is the Gamma function.
Moreover, it is well-known that $\phi_{\xi} (t) $ satisfies the functional relation
\begin{equation} \label{eq relation phi xi 1}
\phi _{\xi} (t) = \mathnormal{f}_0 \left( \phi_{T \xi} \left( \frac{t}{m_0} \right) \right),
\end{equation}
where $f_0 (t) = \sum_{k=1}^{\infty} p_k (\xi_0) t^k$ is the generating function of $Z_1$ under $\xi_0$, defined in \eqref{defin001}.
Using \eqref{eq relation phi xi 1} and the fact that $ \phi^k_{T \xi} \left( \frac{t}{m_0} \right) \leq \phi^2_{T \xi} \left( \frac{t}{m_0} \right)$ for all $k \geq 2$, we obtain
\begin{equation}
\label{eq relation phi xi 2}
\phi_{\xi} (t) \leq p_1 (\xi_0) \phi_{T \xi } \left( \frac{t}{m_0} \right) + ( 1 - p_1 ( \xi_0 ) ) \phi_{T \xi }^2 \left( \frac{t}{m_0} \right).
\end{equation}
Taking the $k$-th power in \eqref{eq relation phi xi 2}, using the binomial expansion and the fact that $\phi_{T \xi}^{2k-i} \left( \frac{t}{m_0} \right) \leq \phi_{T \xi}^{k+1} \left( \frac{t}{m_0} \right)$ for all $i \in \{ 0, \ldots, k-1 \}$, we get
\begin{eqnarray}
\label{eq relation phi xi 3}
\phi_{\xi}^{k} (t) &=&  p_1^{k} (\xi_0) \phi_{T \xi}^{k} \left( \frac{t}{m_0} \right) + \sum_{i=0}^{k-1} C^i_{k}\ p_ 1 (\xi_0)^i (1-p_1 (\xi_0))^{k-i} \phi_{T \xi}^{2(k-i)+i}\left( \frac{t}{m_0} \right) \notag \\
& \leq &  p_1^{k} (\xi_0) \phi_{T \xi}^{k} \left( \frac{t}{m_0} \right) + (1-p_1^k(\xi_0))\phi_{T \xi}^{k+1} \left( \frac{t}{m_0} \right)  \notag \\
&=&  \phi_{T \xi}^{k} \left( \frac{t}{m_0} \right) \left[ p_1^{k} (\xi_0)  + (1-p_1^k(\xi_0)) \phi_{T \xi} \left( \frac{t}{m_0} \right) \right].
\end{eqnarray}
%\begin{equation}
%\phi_{\xi}^{k} (t) \leq  p_1^{k} (\xi_0) \phi_{T \xi}^{k} \left( \frac{t}{m_0} \right) + (1-p_1^{k} (\xi_0))\phi_{T \xi}^{k+1} \left( \frac{t}{m_0} \right).
%\end{equation}
%Consequently, we get a.s.,
%\[
%\phi_{\xi}^{k} \left( \frac{t}{m_0} \right) \leq  \phi_{T \xi}^{k} \left( \frac{t}{\Pi_2} \right) \left[ p_1^{k} (\xi_0)  + (1-p_1^k(\xi_0)) \phi_{T \xi} \left( \frac{t}{\Pi_2} \right) \right] \leq \phi_{T \xi} \left( \frac{t}{\Pi_2} \right).
%\]
By iteration, this leads to
\begin{equation} \label{2.6}
\phi_{\xi}^k (t) \leq \phi_{T^n \xi}^k \left(\frac{t}{\Pi_n}\right) \ \prod_{j=0}^{n-1} \left( p_1^k ( \xi_j) + (1- p_1^k ( \xi_j) ) \phi_{T^n \xi} \left( \frac{t}{\Pi_n} \right) \right).
\end{equation}
Taking expectation and using the fact that  $ \phi_{T^n \xi} (\cdot)  \leq 1$, we have
\[ \phi_k (t) \leq \mathbb{E} \left[ \prod_{j=0}^{n-1} \left( p_1^k ( \xi_j) + (1- p_1^k ( \xi_j) ) \phi_{T^n \xi} \left( \frac{t}{\Pi_n} \right) \right) \right]. \]
Since $ \phi_{\xi} ( \cdot ) $ is non-increasing, using a truncation, we get for all $A>1$,
\begin{eqnarray*}
\phi_k (t) 
&\leq& \mathbb{E} \left[  \prod_{j=0}^{n-1} \left( p_1^k ( \xi_j) + (1- p_1^k ( \xi_j) ) \phi \left( \frac{t}{A^n} \right) \right)  \right] + \mathbb{P}( \Pi_n \geq A^n ).
\end{eqnarray*}
As  $T^n \xi$ is independent of $\sigma ( \xi_0, ... ,\xi_{n-1} )$, and the r.v.'s $ p_1 ( \xi_i)$  ($i\geq 0$) are i.i.d., we obtain
\begin{equation*}
\phi_k (t)  \leq \left[ \gamma_k + (1-  \gamma_k )  \phi \left(\frac{t}{A^n}\right) \right]^n + \mathbb{P}( \Pi_n \geq A^n ),
\end{equation*}
where $\gamma_k = \e p_1^k (\xi_0)$ is defined in \eqref{eq gamma_k}.
By the dominated convergence theorem, we have $\lim_{t \to \infty} \phi (t) = 0$. Thus, for any $\delta \in (0,1)$, there exists a constant $K>0$ such that, for all $ t \geq K$, we have $\phi (t) \leq \delta$.
Consequently, for all
$ t \geq K A^n,$ we have $ \phi  \left(\frac{t}{A^n}\right) \leq \delta $ and 
\begin{equation}
\phi_k (t) \leq \beta^n + \mathbb{P}( \Pi_n \geq A^n ) ,
\label{alpha0}
\end{equation}
where
\begin{equation}
\beta =  \gamma_k + (1-  \gamma_k ) \delta  \in (0,1).
\label{alpha}
\end{equation}
Using Markov's inequality, we have $\mathbb{P} ( \Pi_n \geq A^n) \leq  \left( \mathbb{E} m_0^{p}/ A^{p} \right)^n $.
Setting $ A = \left(\frac{\mathbb{E} m_0^{p}}{\beta} \right)^{1/ p} >1, $ we get for any $ n \in \mathbb{N} $ and $ t \geq K A^n$,
\begin{equation}
 \phi_k (t) \leq 2 \beta^n.
\label{bbb001}
\end{equation}
Now, for any  $t \geq K$, define $n_0 = n_0 (t) = \left[ \frac{\log (t/K)}{\log A } \right] \geq 0$, where $ [x]$ stands for the integer part of $x$, so that
\[  \frac{\log (t/K)}{\log A} - 1 \leq n_0 \leq \frac{\log (t/K)}{\log A} \ \  \text{and} \ \  t \geq K A^{n_0}  .\]
Then, for $t \geq K$,
\[\phi_k (t) \leq 2 \beta^{n_0} \leq 2 \beta^{-1} (t/K)^{\frac{\log \beta }{\log A}} = C_0 t^{-\alpha}, \]
with $C_0 =  2 \beta^{-1} K^{\alpha}$ and $ \alpha = - \frac{\log \beta}{\log A } > 0 $.
Thus, we can choose a constant $C >0$ large enough, such that, for all $t > 0$,
\begin{equation}
\label{majoration phi}
\phi_k (t) \leq C t^{- \alpha}.
\end{equation}
Furthermore, by the definition of $\beta$, $A$ and $\alpha$, we have
\begin{eqnarray*}
\alpha
%&=& - p \frac{ \log \beta_k}{\log \mathbb{E} m_0^{p} -
%\log \beta_k} \\
&=&   \frac{ p }{1- \log \mathbb{E} m_0^{p} / \log \left(\gamma_k + (1- \gamma_k ) \delta \right) },
\end{eqnarray*}
where $\delta \in (0,1)$ is an arbitrary constant and $\gamma_k=\mathbb{E} p_1^k(\xi_0)$.
When $\delta \rightarrow 0$, we have $\alpha \rightarrow \alpha_k$, so that \eqref{majoration phi} holds for all $\alpha < \alpha_k$, where $\alpha_k$ is defined in \eqref{eq ak min}.
By \eqref{moment harmonique et Laplace} and \eqref{majoration phi}, we conclude that $\e W^{- \alpha} < \infty$ for any $\alpha < \alpha_k$.
Moreover, it is easily seen that if $\p(p_1 (\xi_0)=0)<1$, then $\alpha_k < \alpha_{k+1}$ since $\gamma_{k+1} < \gamma_k$;  if additionally $\p (p_1(\xi_0)<1)=1$, then $\lim_{k \to \infty} \gamma_k =0$ so that $\lim_{k \to \infty}  \alpha_k = p$. 
%Therefore, 
%%since $x \mapsto p \log (x) / (\log (x) - \log \mathbb{E} m_0^p) $ is decreasing and $\lim_{x \downarrow 0} \uparrow p \log 
%%(x) / (\log (x) - \log \mathbb{E} m_0^p)=p$, 
%we have $\alpha_k < \alpha_{k+1}$ and $\lim_{k \to \infty} \alpha_k = p$.
\end{proof}
The following lemma is the key technical tool to study the exact decay rate of the Laplace transform of the limit variable $W$.  % (see \cite{liu1999asymptotic}, Lemma 4.1). %(see \cite{liu_17}, Lemma 3.2).
\begin{lemma}[\cite{liu1999asymptotic}, Lemma 4.1]
\label{lem liu}
Let $\phi : \mathbb{R}_+ \to \mathbb{R}_+$ be a bounded function and let $Y$ be a positive random variable such that for some constants $q \in (0,1)$, $a \in (0, \infty)$, $C>0$ and $t_0 \geq 0$ and all $t>t_0$, 
\[
\phi (t) \leq q \mathbb{E} \phi ( Yt) + C t^{-a}.
\]
If $ q \e \left(Y^{-a} \right) < 1$, then $ \phi (t) = O ( t^{-a} )$ as $t \to \infty$.
%and $\int_0^{\infty} \phi (t) t^{a-1} dt < \infty$.
\end{lemma}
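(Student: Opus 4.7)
The plan is to iterate the functional inequality along a multiplicative random walk and exploit the fact that $m := q\,\e[Y^{-a}] < 1$ acts as a contraction constant. A useful first move is to rescale: writing $\psi(t) = t^a \phi(t)$, the hypothesis becomes
\begin{equation*}
\psi(t) \leq q\,\e\!\left[Y^{-a}\,\psi(Yt)\right] + C, \qquad t > t_0,
\end{equation*}
which makes transparent that the relevant kernel when iterating is multiplication by $qY^{-a}$, whose expectation is exactly $m$. The target is to show that $\psi$ is bounded on some half-line $[T,\infty)$.

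Let $(Y_k)_{k \geq 1}$ be i.i.d.\ copies of $Y$ and put $S_k := Y_1 \cdots Y_k$ with $S_0 := 1$. For $t > t_0$, iterate the original inequality $n$ times, at each step splitting the expectation according to whether $S_k t > t_0$ (on which the inequality can be reapplied) or $S_k t \leq t_0$ (on which we fall back on the crude a priori bound $\phi \leq B := \|\phi\|_\infty$). Using $\e[S_k^{-a}] = \e[Y^{-a}]^k$ by independence, a direct induction yields
\begin{equation*}
\phi(t) \leq q^n B \;+\; C\,t^{-a}\sum_{k=0}^{n-1} m^k \;+\; B \sum_{k=1}^{n-1} q^k\, \p(S_k t \leq t_0).
\end{equation*}

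The three terms are then handled separately. The first tends to $0$ as $n \to \infty$ since $q < 1$ and $\phi$ is bounded. The second is dominated by the geometric series $C t^{-a}/(1-m) = O(t^{-a})$. For the third, Markov's inequality applied to $S_k^{-a}$ gives $\p(S_k t \leq t_0) \leq (t_0/t)^a\,\e[Y^{-a}]^k$, so this boundary contribution is at most $B\,t_0^a\,t^{-a}\sum_{k \geq 0} m^k = O(t^{-a})$. Letting $n \to \infty$ produces $\phi(t) \leq C' t^{-a}$ for $t > t_0$ with a constant $C'$ independent of $t$, which is the desired conclusion. The delicate point---and the step I expect to be the main obstacle---is the careful bookkeeping of these boundary corrections: the iteration is only legitimate on the event $\{S_k t > t_0\}$, and at each step the complementary event spawns an extra term that must be shown to remain summable. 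What saves us is the fortunate fact that the Markov estimate on $\p(S_k t \leq t_0)$ produces exactly the factor $\e[Y^{-a}]^k$, so that the hypothesis $q\,\e[Y^{-a}] < 1$ simultaneously controls both the intrinsic iteration error and the boundary error.
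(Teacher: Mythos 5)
Your proof is correct: the iteration along the multiplicative walk $S_k = Y_1\cdots Y_k$, with the split at each step according to $\{S_k t > t_0\}$ versus $\{S_k t \leq t_0\}$, the geometric bound $C t^{-a}\sum_k m^k$ for the inhomogeneous terms, and the Markov estimate $\p(S_k t \leq t_0) \leq (t_0/t)^a (\e Y^{-a})^k$ for the boundary terms, together give $\phi(t) \leq C' t^{-a}$ on $(t_0,\infty)$ once $n \to \infty$ (note that $q\,\e[Y^{-a}]<1$ already guarantees $\e[Y^{-a}]<\infty$, so all expectations used are finite). The paper itself does not prove this statement --- it is quoted as Lemma 4.1 of Liu (1999) --- and your argument is essentially the standard iteration proof of that cited lemma, so there is nothing to object to beyond the implicit (and harmless) assumption that $\phi$ is measurable.
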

Now we proceed to prove Theorem \ref{thm harmonic moments W}.
We first prove the necessity. 
Assume that  $\mathbb{E}_k W^{-a} < \infty$ for some $a>0$. We shall show that $\mathbb{E} p_1^k (\xi_0) m_0^a < 1$.
Note that the r.v.\ $W$ admits the well-known decomposition 
\[
W = \frac{1}{m_0} \sum_{i=1}^{Z_1} W{(i)},
\]
where the r.v.'s $W{(i)}$ $(i \geq 1)$ are i.i.d.\ and independant of $Z_1$ under $\p_\xi$, and are also independent of $Z_1$ and $\xi_0$ under $\p$. The conditional probability law of $W(i)$  satisfies $\p_\xi (  W(i) \in \cdot ) = \p_{T \xi} (  W \in \cdot )$. Since $\p_k ( Z_1 \geq k+1)>0$, we have
\begin{equation}
\mathbb{E}_k W^{-a} > \e_k m_0^a  \left(\sum_{i=1}^{Z_1} W{(i)} \right)^{-a} \mathds{1} \{ Z_1 = k \} = \e p_1^k (\xi_0) m_0^a  \ \e_k W^{-a},
\end{equation}
which implies that $\e p_1^k (\xi_0) m_0^a < 1$.

We now prove the sufficiency. Assume that $\mathbb{E} m_0^{p}< \infty$ and $\e p_1^k(\xi_0) m_0^a < 1$ for some $a \in (0,p)$.

We first consider the case where $\mathbb{P}(p_1 (\xi_0)<1)=1$. We prove that $\e_k W^{-a}< \infty$ by showing that $ \phi_k (t) = O \left( t^{-(a+ \varepsilon)} \right)$ as $t \to \infty$, for some $\varepsilon>0$. By Lemma \ref{lem ak min}, there exists an integer $j \geq k$ large enough and a constant $C>0$ such that
\begin{equation}
\label{majoration phi_j}
\phi_j (t) \leq C t^{-(a+ \varepsilon)},  
\end{equation}
with $\varepsilon>0$ and $a+ \varepsilon<p$.
By \eqref{eq relation phi xi 3}, we have
\begin{equation}
\label{eq relation phi xi 4}
\phi_{\xi}^{j-1} (t) \leq  p_1^{j-1} (\xi_0) \phi_{T \xi}^{j-1} \left( \frac{t}{m_0} \right) + \phi_{T \xi}^j \left( \frac{t}{m_0} \right).
\end{equation}
% $\phi_j (t) = O ( t^{-s_j}), \ ( t \to \infty)$,  with $ a <s_j< a + \varepsilon $. In particular there exists a constant $C>0$ such that, 
%\begin{equation}
%\label{eq majoration phi j}
%\phi_j (t) \leq C t^{-a}. % \phi_j (t) = O (t^{-a}).
%\end{equation}
Taking the expectation in \eqref{eq relation phi xi 4}, using \eqref{annealed laplace Wn W}, \eqref{majoration phi_j} and the independence between $\xi_0$ and $T \xi$, we obtain
\begin{eqnarray}
\label{eq relation phi xi 5}
\phi_{j-1} (t) 
&\leq& \mathbb{E} \left[ p_1^{j-1}(\xi_0) \phi_{j-1} \left( \frac{t}{m_0} \right) \right] + C t^{-(a+ \varepsilon)} \notag \\
&=& \gamma_{j-1} \mathbb{E} \left[ \phi_{j-1} (Yt) \right] + C t^{-(a+ \varepsilon)},
\end{eqnarray}
where $\gamma_{j-1}= \mathbb{E} \left[ p_1^{j-1} (\xi_0)\right]<1$ and $Y$ is a positive random variable whose distribution is 
determined by
\[
\mathbb{E} \left[ g(Y) \right] = \frac{1}{\gamma_{j-1}} \mathbb{E} \left[ p_1^{j-1}(\xi_0) g \left( \frac{1}{m_0} \right) \right],
\]
for all bounded and measurable function $g$.
%\begin{eqnarray*}
%\phi_{j-1} (t) 
%&\leq& \mathbb{E} \left[ p_1^{j-1} \phi_{T \xi}^{j-1} \left( \frac{t}{m_0} \right) \right] + \mathbb{E} \left[ \phi_{T \xi}^{j} \left( \frac{t}{m_0} \right) \right] \\
%&=& \mathbb{E} \left[ p_1^{j-1} \phi_{j-1} \left( \frac{t}{m_0} \right) \right] + \mathbb{E}_j \left[  \phi_{T\xi} \left( \frac{t}{m_0} \right) \right] \\
%&=& \mathbb{E} \left[ p_1^{j-1} \phi_{j-1} \left( \frac{t}{m_0} \right) \right] +   \phi_j \left( t \right) \\
%&=& \mathbb{E} \left[ p_1^{j-1} \phi_{j-1} \left( \frac{t}{m_0} \right) \right] + t^{-s_j}.
%\end{eqnarray*}
By hypothesis, $\e p_1^k (\xi_0) m_0^a < 1$. 
Then, by the dominated convergence theorem, there exists $\varepsilon >0$ small enough such that $\e p_1^k (\xi_0) m_0^{a+\varepsilon} < 1$, and since $  j-1 \geq k$, we have  $\e p_1^{j-1} (\xi_0) m_0^{a+\varepsilon} \leq \e p_1^k(\xi_0)  m_0^{a+\varepsilon}<1$. Therefore, 
$ \gamma_{j-1} \e [ Y^{-(a+\varepsilon)} ]<1$ and using  \eqref{eq relation phi xi 5} and Lemma \ref{lem liu}, we get $ \phi_{j-1} (t) = O ( t^{-(a+ \varepsilon)} )$ as  $t \to \infty $.
By induction, applying \eqref{eq relation phi xi 4} and \eqref{eq relation phi xi 5} to the functions $\phi_{j-2}, \phi_{j-3}, \ldots , \phi_{k}$ 
and using the same argument as in the proof for $\phi_{j-1}$, we obtain
\begin{equation}
\phi_{k} (t) = O ( t^{-(a+ \varepsilon)} ) \quad \text{as} \quad t \to \infty .
\end{equation}
Therefore, in the case where $\mathbb{P}(p_1 (\xi_0)<1)=1$, we have proved that 
\begin{equation}
\label{eq implication moment harm W cas p1<1}
\e p_1^k (\xi_0) m_0^a < 1 \quad \text{implies} \quad 
\e_k W^{-a} < \infty.
\end{equation}
%which ends the proof of Theorem \ref{thm harmonic moments W} \hl{in the case when $\mathbb{P}(p_1 (\xi_0)<1)=1$.}

Now consider the general case where $\mathbb{P}(p_1 (\xi_0)<1)<1.$ % may be positive.
Denote the distribution of $\xi_0$ by $\tau_0$ and define a new distribution  $\tilde{\tau}_0$ as
\begin{equation}
\tilde{\tau}_0 (\cdot) =  \tau_0 (\cdot | p_1 (\xi_0) <1). % \frac{\tau_0 (\cdot | p_1 (\xi_0) <1)}{\p ( p_1 (\xi_0) <1)} .
\end{equation}
Consider the new branching process whose environment distribution is  $\tilde{\tau}= \tilde{\tau_0}^{\otimes \mathbb{N}}$ instead of $\tau = \tau_0^{\otimes \mathbb{N}}$. The corresponding probability and expectation are denoted by $\tilde{\p}(dx,d\xi) = \p_\xi(dx) \tilde{\tau}(d\xi)$ and $\tilde{\e}$, respectively.
Of course $(W_n)$ is still a martingale under $\tilde{\p}$. Moreover, the condition $\tilde{\e} \left[ \frac{Z_1}{m_0} \log^+ Z_1 \right]  = \e \left[ \frac{Z_1}{m_0} \log^+ Z_1 \right] /\p ( p_1 (\xi_0) < 1) < \infty$ implies that $W_n \to W$ in $L^1 (\tilde{\p})$.
Now we show that $\mathbb{E}_k \left[ W^{-a} \right]  \leq \tilde{\mathbb{E}}_k \left[ W^{- a} \right] $.
%For $ 1 \leq i \leq n$, denote by $I_{i,j}$ ($1 \leq j \leq C^i_n$) all the subsets of  $I_n= \{1, \ldots , n \}$ such that $#I_{i,j} = i$, and define
%\[
%A_{i,n} = \cup_{1 \leq j \leq C^i_n} \{ \xi_0, \xi_1, \ldots, \xi_n \ | \ p_1 ( \xi_j) = 1 \ \text{for all}\ j \in I_i, \ \text{and}\ p_1 ( \xi_j) <1 \  \text{for all }\ j \in I_n \backslash I_i \}.
%\]
For $ 0 \leq i \leq n$, denote
\begin{eqnarray*}
A_{i,n} &=& \big\{ ( \xi_0, \ldots, \xi_{ n-1}) \ | \ p_1 ( \xi_{j_1}) = \ldots =  p_1 ( \xi_{j_i}) =1 \ \text{for some}\ 0 \leq j_1 < \ldots < j_i \leq n-1, \\
&&\ \text{and}\ p_1 ( \xi_h) <1 \  \text{for all }\ h \in \{ 0, \ldots n -1\} \backslash \{ j_1, \ldots , j_{i} \} \big\}. 
\end{eqnarray*}
Conditioning by the events $A_{i,n}$ ($i \in \{0, \ldots, n \}$) and using the fact that the r.v.'s $\xi_0, \ldots ,\xi_{ n-1}$ are i.i.d., we obtain, for all $n \in \mathbb{N}$,
\begin{eqnarray}
\label{eq moment harmonique Wn p1<1 et p1=1}
\mathbb{E}_k\left[ W_n^{- a} \right] 
&=& \sum_{i=0}^n \mathbb{E}_k \left[ W_n^{- a} \big| A_{i,n} \right] \p \left( A_{i,n} \right) \notag \\
&=& \sum_{i=0}^{n} \mathbb{E}_k \left[ W_n^{- a} \big| A_{i,n} \right] C^i_n \eta^i (1- \eta)^{n-i},
%&=& \sum_{i=0}^{n} C^i_n \tilde{\e}_k \left[ W_{n-i}^{- a} \right] \eta^i (1- \eta)^{n-i},
% &=& \sum_{i=0}^{\infty} C^i_n \tilde{\e}_k \left[ W_{n-i}^{- \alpha} \right] \eta^i (1- \eta)^{n-i} \mathds{1}\{ i \leq n \},
\end{eqnarray}
with $\eta = \p ( p_1 (\xi_0) =1)$. 
Moreover, using \eqref{relation recurrence Zn}, a straightforward computation leads to the decomposition 
\begin{equation}
\label{decomposition produit Wn}
W_{n} = \prod_{i=0}^{n-1} \eta_i, \quad \text{with} \quad  n \geq 1 \quad \text{and} \quad \eta_i =\frac{1}{Z_i} \sum_{j=1}^{Z_{i}} \frac{N_{i,j}}{m_i}.
\end{equation}
Note that, on the event $\left\{p_1 (\xi_i) =1\right\}$ we have $\eta_i=1$. Therefore, using \eqref{decomposition produit Wn} and the fact that the r.v.'s $\xi_0, \ldots, \xi_{ n-1}$ are i.i.d., we get
\begin{equation}
\label{eq E Wn | Akn = tilde E Wn-k}
\mathbb{E}_k \left[ W_n^{- a} \big| A_{i,n} \right] = \tilde{\e}_k \left[ W_{n-i}^{- a} \right].
\end{equation}
By the convexity of the function $x \mapsto x^{- a}$, we have $ \sup_{n  \geq i} \tilde{\e}_k [W_{n-i}^{-a}] \leq  \tilde{\e}_k W^{-a}$ (see \cite{liu} Lemma 2.1). Thus, by \eqref{eq moment harmonique Wn p1<1 et p1=1} and \eqref{eq E Wn | Akn = tilde E Wn-k}, we obtain
\begin{equation}
\label{eq majoration E W-a leq tilde E W-a}
\mathbb{E}_k \left[ W^{- a} \right]  \leq \tilde{\e}_k \left[ W^{-a} \right].
\end{equation}
Note that, conditioning by the events $\{p_1 (\xi_0)=1 \}$ and $ \{ p_1 (\xi_0)<1 \}$, we have
\[
\e p_1^k (\xi_0) m_0^a = (1- \eta) \tilde{\e} p_1^k (\xi_0) m_0^a  + \eta,
\]
with $\eta = \p ( p_1 (\xi_0) =1)$. So the condition $\e p_1^k(\xi_0) m_0^a <1$ implies that $ \tilde{\e} p_1^k(\xi_0) m_0^a <1$.
Then, by \eqref{eq implication moment harm W cas p1<1} applied under the probability $ \tilde{\p} $, and the fact that $\tilde{\p} ( p_1 (\xi_0) < 1) =1$,  we get $\tilde{\e}_k \left[ W^{-a} \right] < \infty$. Therefore, by \eqref{eq majoration E W-a leq tilde E W-a}, it follows that
\begin{equation}
\e p_1^k (\xi_0) m_0^a < 1 \quad \text{implies} \quad \e_k W^{-a} < \infty,
\end{equation}
which ends the proof of Theorem \ref{thm harmonic moments W}.

\section{Small value probability in the non-extinction case}
\label{sec small value non extinction}
In this section we prove Theorem  \ref{thm small value probability 2}.
We start with the proof of part a).
For $k \geq 1$ and $j \geq k$, define
\begin{equation}
a_{k,n} (j) = \frac{ \p \l   Z_n =j  | Z_0=k \r}{\gamma_k^n},
\end{equation}
with $\gamma_k = \p_k (Z_1=k)$. By the Markov property, we have
\[
 \mathbb{P}_k \left( Z_{n+1} = j \right) \geq \mathbb{P}_k \l Z_1 = k \r \mathbb{P}_k \l Z_n = j \r.
\]
Dividing by $\gamma_k^{n+1}$ leads to 
\begin{equation}
\label{majoration a_k,n (j)}
 a_{k,n+1} (j) \geq a_{k,n} (j) . 
\end{equation}
Therefore, by the monotone ratio theorem, we obtain
\[  \lim_{n \to \infty}  \uparrow a_{k,n} (j) = q_{k,j} \in \bar { \bb R} . \]
We shall prove that $q_{k,j}$ satisfies the properties claimed in the theorem. 
If $j$ is such that $\p_k (Z_n=j) =0$ for any $n \geq 0$, then $a_{k,n}(j)=0$ for any $n  \geq 0$, so that $ \lim_{n \to \infty} a_{k,n}(j)=0=q_{k,j}$.
If there exists $l \geq 0$ such that $\p_k (Z_l=j) >0$, then $q_{k,j} \geq a_{k,l} (j) = \p_k (Z_l=j)/ \gamma_k^l >0$. 

Now we show by induction that for all $j \geq k$, we have 
\[ 
H (j): \quad 
 \underset{n \in \bb N}{\sup} \ a_{k,n} (j) = a_k (j) < \infty. 
 \]
For $j=k$, we have $a_k(k)=1$.
Assume that $j \geq k+1$ and that $H(i)$ is true for all $ k \leq i \leq j-1$. By the total probability formula, we obtain
\[ \frac{\p_k \l Z_{n+1}=j \r}{\gamma_k^{n+1}} = \frac{1}{\gamma_k} \sum_{i=k}^{j} \p_k \l Z_{n+1} = j | Z_n = i \r  \frac{\p_k \l Z_n = i \r}{\gamma_k^n} , \]
which is equivalent to
\begin{equation}
\label{recurrence a_n (k)}
a_{k,n+1} (j) = \frac{1}{\gamma_k} \left[ \sum_{i=k}^{j-1} p(i,j) a_{k,n} (i) + \gamma_{j} a_{k,n} (j) \right]
\end{equation}
with $p(i,j) = \p \l Z_{1} = j | Z_0 = i \r $.
Using the fact that $a_{k,n} (j) \leq a_{k,n+1} (j)$, we get by induction that
\begin{eqnarray}
\label{RDR pi_k}
\sup_{n \in \mathbb{N}} \ a_{k,n+1} (j) (\gamma_k - \gamma_{j})  &\leq& \sum_{i=k}^{j-1} p(i,j) a_k(i) < \infty. \nonumber
\end{eqnarray}
Thus $q_{k,j} < \infty$ for all $j \geq k+1$ and $k\geq 1$. 
Furthermore, taking the limit as $n \to \infty$ in (\ref{recurrence a_n (k)}), leads to the following recurrent relation for $q_{k,j}$:
\begin{equation*}
q_{k,k} = 1, \quad \gamma_k q_{k,j} = \sum_{i=k}^j p(i, j) q_{k,i} \quad (j \geq k+1).
\end{equation*}
This end the proof of part a) of Theorem \ref{thm small value probability 2}. 

%\subsection{Proof of Corollary \ref{corollary small value probability 1} and \ref{corollary small value probability 2}}
%\label{subsec small value kn}
Now we prove part b) of Theorem \ref{thm small value probability 2}.
We give a proof that the radius of convergence of the power series $Q_k$ is equal to 1. 
The method  is new even in the case of the Galton-Watson process. % and more probabilistic
We start with a lemma.
\begin{lemma}
\label{lem R Q}
Let $k \geq 1$. Assume that $\p (p_1(\xi_0) <1 ) =1$ and that there exists $\varepsilon>0$ such that $\mathbb{E}[ m_0^{r_k+ \varepsilon} ] < \infty$, where $r_k$  is the solution of the equation $\gamma_k = \e m_0^{-r_k}$. Then, for any $r>r_k$, we have
\begin{equation}
\label{eq lem R Q}
\lim_{n \to \infty} \uparrow \frac{\mathbb{E}_k Z_n^{-r}}{\gamma_k^n}  < \infty.
\end{equation}
\end{lemma}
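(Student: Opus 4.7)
The plan is to set up a linear recursion for $L_n:=\mathbb{E}_k[Z_n^{-r}]$ of the form $L_{n+1}\le C_J\gamma_k^n+\rho^* L_n$ with $\rho^*<\gamma_k$, and then iterate. By the Markov property, together with the fact that $\xi_n$ is independent of $Z_n$ and has the same law as $\xi_0$, we obtain $L_{n+1}=\mathbb{E}_k[\psi(Z_n)]$, where
\[
\psi(j):=\mathbb{E}_j\!\left[Z_1^{-r}\right]=j^{-r}\rho(j),\qquad \rho(j):=\mathbb{E}_j\!\left[(Z_1/j)^{-r}\right].
\]
Because $p_0(\xi_0)=0$ a.s.\ forces $Z_1\ge j$ under $\mathbb{P}_j$, the trivial bound $\rho(j)\le 1$ (equivalently $\psi(j)\le j^{-r}$) holds for every $j\ge k$.

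The key analytic input is the asymptotic $\rho(j)\to\mathbb{E}[m_0^{-r}]$ as $j\to\infty$. Under $\mathbb{P}_{\xi,j}$ the quantity $Z_1/j=\frac{1}{j}\sum_{i=1}^{j}N_{0,i}$ is the sample mean of $j$ i.i.d.\ offspring variables, so the strong law of large numbers yields $Z_1/j\to m_0$ a.s.; together with the uniform domination $(Z_1/j)^{-r}\le 1$, dominated convergence applied first conditionally on $\xi_0$ and then in $\xi_0$ gives the claim. The assumption $\mathbb{P}(p_1(\xi_0)<1)=1$ forces $\sum_{i\ge 2}p_i(\xi_0)>0$ a.s., hence $m_0>1$ a.s., so $r\mapsto\mathbb{E}[m_0^{-r}]$ is strictly decreasing and $\mathbb{E}[m_0^{-r}]<\mathbb{E}[m_0^{-r_k}]=\gamma_k$ for any $r>r_k$. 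I then fix $\rho^*\in(\mathbb{E}[m_0^{-r}],\gamma_k)$ and $J\ge k$ large enough that $\rho(j)\le\rho^*$ for all $j>J$.

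Splitting $L_{n+1}$ at $J$, the tail $\sum_{j>J}\mathbb{P}_k(Z_n=j)\psi(j)$ is bounded by $\rho^* L_n$ (since $\psi(j)\le\rho^* j^{-r}$ there). For the head sum $\sum_{j=k}^{J}\mathbb{P}_k(Z_n=j)\psi(j)$, part a) of Theorem \ref{thm small value probability 2} (already proved earlier in this section) provides the bound $\mathbb{P}_k(Z_n=j)=\gamma_k^n a_{k,n}(j)\le\gamma_k^n q_{k,j}$ with $q_{k,j}<\infty$ for each $k\le j\le J$; combined with $\psi(j)\le j^{-r}$ this gives a head bound $C_J\gamma_k^n$, with $C_J:=\sum_{j=k}^{J}j^{-r}q_{k,j}<\infty$. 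Hence $L_{n+1}\le C_J\gamma_k^n+\rho^* L_n$, and a direct induction exploiting $\rho^*<\gamma_k$ yields $L_n\le C\gamma_k^n$ for some finite $C$; the monotone limit $L_n/\gamma_k^n\uparrow\sum_{j\ge k}j^{-r}q_{k,j}$ is therefore finite. The main difficulty is converting the pointwise limit $\rho(j)\to\mathbb{E}[m_0^{-r}]$ into a uniform tail estimate strictly below $\gamma_k$, and this is precisely where $\mathbb{P}(p_1(\xi_0)<1)=1$ is essential: without $m_0>1$ a.s.\ the map $r\mapsto\mathbb{E}[m_0^{-r}]$ would not be strictly decreasing, and the gap $\rho^*<\gamma_k$ needed to close the recursion would not be available.
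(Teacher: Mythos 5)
Your proof is correct, but it follows a genuinely different route from the paper's. The paper first decomposes at the \emph{first} generation, $\mathbb{E}_k[Z_{n+1}^{-r}]\leq \gamma_k\,\mathbb{E}_k[Z_n^{-r}]+\mathbb{E}_{k+1}[Z_n^{-r}]$, and then controls the second term by a change of measure $\mathbb{E}^{(r)}_k[T]=\mathbb{E}_k[\Pi_n^{-r}T]/c_r^n$ (with $c_r=\e m_0^{-r}$), which converts $\mathbb{E}_{k+1}[Z_n^{-r}]$ into $c_r^n\,\mathbb{E}^{(r)}_{k+1}[W_n^{-r}]$ and invokes Theorem \ref{thm harmonic moments W} under the tilted law to get $\mathbb{E}_{k+1}[Z_n^{-r}]\leq C(r)c_r^n$ for $r<r_{k+1}$; this is exactly where the hypotheses $\mathbb{E}[m_0^{r_k+\varepsilon}]<\infty$ and the intermediate restriction $r\in(r_k,r_k+\varepsilon)$ enter, the general $r>r_k$ being recovered by monotonicity in $r$. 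You instead condition on the \emph{last} step, writing $\mathbb{E}_k[Z_{n+1}^{-r}]=\mathbb{E}_k[\psi(Z_n)]$ with $\psi(j)=\mathbb{E}_j[Z_1^{-r}]=j^{-r}\rho(j)$, prove $\rho(j)\to c_r$ by the quenched law of large numbers together with the uniform bound $(Z_1/j)^{-r}\leq 1$ (valid since $p_0(\xi_0)=0$ a.s.), use $m_0>1$ a.s.\ to get $c_r<\gamma_k$ for $r>r_k$, control the head $\{Z_n\leq J\}$ via the bound $\p_k(Z_n=j)\leq\gamma_k^n q_{k,j}$ from part a) of Theorem \ref{thm small value probability 2} (which is proved before the lemma, so there is no circularity), and close an affine recursion $L_{n+1}\leq C_J\gamma_k^n+\rho^* L_n$ with $\rho^*<\gamma_k$. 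All steps check out: the SLLN is legitimate on the standard construction where $Z_1$ under $\p_j$ is $\sum_{i=1}^j N_{0,i}$, $m_0<\infty$ a.s.\ follows from the standing assumptions, and monotone convergence of $a_{k,n}(j)\uparrow q_{k,j}$ gives the increasing limit in \eqref{eq lem R Q}. What your approach buys is a more elementary and self-contained argument: it bypasses the change of measure and Theorem \ref{thm harmonic moments W} entirely, treats all $r>r_k$ at once, and in fact never uses the moment hypothesis $\mathbb{E}[m_0^{r_k+\varepsilon}]<\infty$, so it proves a slightly stronger statement; what the paper's route buys is machinery (tilted harmonic moments of $W$) that is of independent use elsewhere. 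One small inaccuracy in your closing remark: strict decrease of $r\mapsto\e m_0^{-r}$ only requires $\p(m_0>1)>0$, which already follows from supercriticality; the role of $\p(p_1(\xi_0)<1)=1$ (with $p_0=0$) in your argument is to give $m_0>1$ a.s., hence $c_r<\gamma_k$, and it is also what makes part a)'s bound usable with $\gamma_j<\gamma_k$ — but this aside does not affect the validity of your proof.
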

\begin{proof}
By the Markov property, 
$$
\mathbb{E}_k \left[ Z_{n+1}^{-r} \right] \geq \mathbb{E}_k \left[ Z_{n+1}^{-r} |Z_1=k \right] \p_k(Z_1=k)  = \gamma_k \mathbb{E}_k \left[ Z_{n}^{-r} \right],
$$ 
which proves that the sequence $(\mathbb{E}_k \left[ Z_{n}^{-r} \right] / \gamma_k^{n})_{n \in \mathbb{N}}$ is increasing.
We show that it is bounded. For $n \geq 1$ and $m \geq 0$, we have the following well-known branching property for $Z_n$:
\begin{equation}
\label{decomposition Zn1}
Z_{n+m} = \sum_{i=1}^{Z_m} Z_{n,i}^{(m)},
\end{equation}
where, under $\mathbb{P}_{\xi}$, the random variables $Z_{n,i}^{(m)}$ $( i \geq 1) $ are i.i.d., independent of $Z_m$, whose conditional probability law 
satisfies 
$ \mathbb{P}_{\xi} \left( Z_{n,i}^{(m)} \in \cdot \right)= \mathbb{P}_{T^m \xi} \left( Z_n \in \cdot \right) $, with $T^m$ the shift operator defined by $T^m (\xi_0, \xi_1 , \ldots ) = (\xi_m, \xi_{m+1} , \ldots )$. 
Intuitively, relation \eqref{decomposition Zn1} shows that, conditionally on $Z_m=i$, the annealed law of the process $Z_{n+m}$ is the same as that of a new process $Z_n $ starting with $i$ individuals.

Using  \eqref{decomposition Zn1} with $m=1$, the independence between $Z_1$ and ${Z}_{n,i}^{(1)}$ $(i \geq 1)$ and the fact that $\e_{i}Z_n^{-r} \leq \e_{k+1} Z_n^{-r}$ for all $i \geq k+1$, we have
\begin{eqnarray}
\label{eq induction 1}
\mathbb{E}_k \left[ Z_{n+1}^{-r} \right] &=& \mathbb{E}_k \left[ Z_{n+1}^{-r} | Z_1 = k\right] \p_k ( Z_1 = k) \notag \\
&& + \sum_{i=k+1}^{\infty} \mathbb{E} \left[ \left( \sum_{h=1}^{i} Z_{n,h}^{(1)} \right)^{-r} \bigg| Z_1 = i \right] \p_k ( Z_1 =i)  \nonumber \\
&\leq& \gamma_k \mathbb{E}_k \left[ Z_n^{-r} \right] +  \mathbb{E}_{k+1} \left[ Z_n^{-r} \right] .
\end{eqnarray}
We shall use the following change of measure: for $k \geq 1$ and $r>0$, let $\p_k^{(r)}$ be a new probability measure determined by
\begin{equation}
\label{changement de mesure}
\mathbb{E}_k^{(r)} [T] =\frac{\mathbb{E}_k \left[ \Pi_n^{-r} T \right]}{c_r^n}
\end{equation}
 for any $\mathcal{F}_n$-measurable random variable $T$, 
where $c_r = \e m_0^{-r}$. By \eqref{changement de mesure}, we obtain
\begin{equation}
\e_{k+1} \left[ Z_n^{-r} \right] = \e_{k+1}^{(r)} [ W_n^{-r} ] c_r^n,
\end{equation}
with $\sup_{n \in \mathbb{N}} \e_{k+1}^{(r)} [ W_n^{-r} ] = \e_{k+1}^{(r)} [ W^{-r} ]$ (see \cite{liu}, Lemma 2.1).
Moreover, we have $\e^{(r)} [ p_1^{k+1} (\xi_0) m_0^{r} ] = \gamma_{k+1}/ \e m_0^{-r}<1$ for any $r<r_{k+1}$.
%$\tilde{\e} [ p_1^{k+1} m_0^{r} ] < 1 \iff \gamma_{k+1} < \mathbb{E} m_0^{-r} \iff r<r_{k+1}$. 
So by Theorem \ref{thm harmonic moments W} we get
$
\e_{k+1}^{(r)} [ W^{-r} ] = C(r) < \infty 
$
and then $\e_{k+1} \left[ Z_n^{-r} \right] \leq C(r) c_r^n$ for any $r<r_k+ \varepsilon < r_{k+1}$.
Coming back to \eqref{eq induction 1} with $r< r_k+ \varepsilon $, we get by induction that
\begin{equation}
\label{induction j_0-1}
\mathbb{E}_{k} \left[ Z_{n+1}^{-r} \right] \leq \gamma_{k}^{n+1}  + C \sum_{j=0}^{n} \gamma_{k}^{n-j} c_r^j.
\end{equation}
Choose $r>r_k$ such that $c_r < \gamma_k$. Then, we have, as $ n \to \infty$,
\begin{equation}
\label{cv gamma < delta}
\frac{\mathbb{E}_{k} \left[ Z_{n+1}^{-r} \right]}{\gamma_k^{n+1}} \leq 1 + \frac{C}{\gamma_{k}} \sum_{j=0}^{n} \left( \frac{c_r}{\gamma_k} \right)^j \to \frac{C}{\gamma_{k} - c_r }.
\end{equation}
%where $r \in (r_k , r_{k} + \varepsilon)$. 
Thus the sequence $(\mathbb{E}_k \left[ Z_{n}^{-r} \right] / \gamma_k^{n})_{n \in \mathbb{N}}$ is bounded and \eqref{eq lem R Q} holds for any $r \in (r_k , r_{k} + \varepsilon)$. Using the fact that $\mathbb{E}_{k} \left[ Z_{n+1}^{-r'} \right] \leq \mathbb{E}_{k} \left[ Z_{n+1}^{-r} \right]$ for any $r'>r$, the result follows for any $r>r_k$, which ends the proof of the lemma.
\end{proof}
\begin{remark}
From the results stated above, with some additional analysis one can obtain the equivalent of the harmonic moments $\e Z_n^{-r}$ for any $r>0.$  
However, it is delicate to have an expression of the concerned constant in the equivalence.  This will be considered in
%asymptotics of the harmonic moments %$\e Z_n^{-r}$ for all $r>0$ 
%with constant expressed in terms of  is postponed to 
a forthcoming paper.
\end{remark}

%\begin{remark}
%In can be easily deduced from the previous proof the following statement for the asymptotic harmonic moments of $Z_n$.
%Under assumption $\e m_0^{r_k+ \varepsilon} < \infty$, there exists a positive constant $C(k,r)$ such that
%\begin{enumerate}
%\item If $\gamma_k < c_r$ (which corresponds to the case $r<r_k$), 
%\begin{equation}
%\label{cas 1}
%\mathbb{E}_k \left[ Z_{n}^{-r} \right] c_r^{-n} \underset{ n \to \infty}{\longrightarrow} C(k,r) . 
%\end{equation}
%\item If $ \gamma_k > c_r$ (which corresponds to the case $r>r_k$),
%\begin{equation}
%\label{cas 3}
%\mathbb{E}_k \left[ Z_{n}^{-r} \right] \gamma_k^{-n} \underset{ n \to \infty}{\longrightarrow} C(k,r) .
%\end{equation}
%\item If $\gamma_k=c_r$ (which corresponds to the case $r=r_k$),  
%\begin{equation}
%\label{cas 2}
%\mathbb{E}_k \left[ Z_{n+1}^{-r} \right] \gamma_k^{-n}n^{-1} \underset{ n \to \infty}{\longrightarrow}  C(k,r) .
%\end{equation}
%\end{enumerate}
%This result will be discussed and improved in a forthcoming paper.
%\end{remark}

Now we show that the radius of convergence $R$ of the power series $Q_k (t) = \sum_{j=k}^{\infty} q_{k,j} t^j$ is equal to $1$. Using the fact that $\sum_{j=k}^{\infty} \p_k \left( Z_n = j \right) = 1$,  part a) of Theorem \ref{thm small value probability 2} and the monotone convergence theorem, we have 
\[ \lim_{n \to \infty} \uparrow \gamma_k^{-n}  \sum_{j=k}^{\infty} \p_k \left( Z_n = j \right) = \sum_{j=k}^{\infty} q_{k,j} = + \infty,\]
which proves that $R \leq 1$.
We prove that $R=1$ by showing that $\sum_{j=k}^{+ \infty} j^{-r} q_{k,j} < \infty $ for $r>0$ large enough. Using 
% \eqref{small value asymptotic 2}, 
part a) of Theorem \ref{thm small value probability 2}, the monotone convergence theorem and Lemma \ref{lem R Q}, we have,  for any $r>r_k$,
\begin{equation}
\sum_{j=k}^{+ \infty} j^{-r} q_{k,j} = \sum_{j=k}^{+ \infty} j^{-r} \lim_{n \to \infty} \uparrow \frac{\p_k (Z_n =j)}{\gamma_k^n}= 
%\lim_{n \to \infty}\uparrow \sum_{j=k}^{+ \infty} j^{-r} \frac{\p_k (Z_n =j)}{\gamma_k^n} =
 \lim_{n \to \infty}  \uparrow \frac{\e_k Z_n^{-r}}{\gamma_k^n} < \infty,
\end{equation}
which proves part b).

Now we prove part c) of Theorem \ref{thm small value probability 2}.
Using part a), the definition of $G_{k,n}$ and the monotone convergence theorem, we get \eqref{cv Qnk ->Qk}.
To prove the functional relation (\ref{relation Q_k}), recall that $G_{k,1} (t) = \sum_{j=k}^{\infty} p(k,j) t^j=\mathbb{E} f_0^k (t)$. 
%Computing the generating functions of both sides in 
By (\ref{relation rec qkj}) and Fubini's theorem, we get
\begin{eqnarray*}
 \gamma_k Q_k (t) &=& \sum_{j=k}^{\infty} \sum_{i=k}^{\infty} q_{k,i} \ p (i, j) \mathds{1}(i \leq j) t^j \\
              &=& \sum_{i=k}^{\infty} q_{k,i} \sum_{j=i}^{\infty} p (i, j) t^j \\
              % &=& \sum_{i=k}^{\infty} q_{k,i} G_{i,1} (t) \\
              &=& \sum_{i=k}^{\infty} q_{k,i} \mathbb{E} \left[ f_0^i (t)\right] \\
              &=& \mathbb{E} \left[ \sum_{i=k}^{\infty} q_{k,i}  f_0^i (t) \right] \\
              &=& \mathbb{E} \left[ Q_k (f_0 (t)) \right].
\end{eqnarray*}
This proves the functional relation (\ref{relation Q_k}). 

We now prove that the previous functional relation characterizes the function $Q_k$. 
To this end it suffices to show the unicity of the solution of (\ref{relation Q_k}).
Assume that there exists a power series $\hat Q(t)= \sum_{j=0}^{\infty} \hat q_{k,j} t^j$ on $[0,1)$ which verifies \eqref{relation Q_k} with the initial  condition $q_{k,k}=\hat q_{k,k}=1$. 
We first show by induction in $n$ that $\hat Q^{(n)} (0)=0$ for all $n \in \{ 0, \ldots , k-1 \}$.
Since $f_0 (0)=0$ and $\gamma_k \in (0,1)$, by \eqref{relation Q_k}, we get $\gamma_k \hat Q(0) = \hat Q (0)$, which implies that $\hat Q^{(0)}(0)=\hat Q(0)=0$. 
%In the same way, using the fact that $( Q(f (t)) )' = Q' (f (t) ) f' (t)$ and $f ' (0) = p_1 (\xi)$, we have from \eqref{relation Q_k} that $\gamma_k Q'(0) = Q' (0) \gamma $, and since $\gamma_k < \gamma$ for $k >1$, it follows that $Q'(0) =0$.
By  the induction hypothesis we have that $\hat Q^{(j)} (0)=0$ for all $j \in \{ 0, \ldots , n-1 \}$ for some $n < k-1.$ 
We show that $\hat Q^{(n)} (0)=0$. Using Fa\`a di Bruno's formula, we have
%\begin{equation}
%\label{Faa di Bruno}
%\left( Q \circ f \right)^{n} (t) = \sum \frac{n!}{m_1!\,m_2!\,\cdots\,m_n!} Q^{(m_1+\cdots+m_n)}(f(t)) \prod_{j=1}^n\left(\frac{f^{(j)}(t)}{j!}\right)^{m_j},
%\end{equation}
%where $(m_1, m_2, \ldots, m_n)$ are integers such that $ 1 m_1 + \ldots + n m_n = n$.
\begin{equation}
\label{Faa di Bruno}
\left( \hat Q \circ f_0 \right)^{(n)} (t) = \sum_{j=1}^n  \hat Q^{(j)}(f_0(t)) B_{n,j} \left( f_0^{(1)}(t), \ldots , f_0^{(n-j+1)} (t) \right),
\end{equation} 
where $B_{n,j}$ are the Bell polynomials,
defined for any $ 1 \leq j \leq n $ by 
\begin{eqnarray*}
\label{Bell}
 &&B_{n,j}(x_1,x_2,\dots,x_{n-j+1}) \\
 && \qquad \qquad =\sum{n! \over i_1!i_2!\cdots i_{n-j+1}!}
\left({x_1\over 1!}\right)^{i_1}\left({x_2\over 2!}\right)^{i_2}\cdots\left({x_{n-j+1} \over (n-j+1)!}\right)^{i_{n-j+1}},
\end{eqnarray*}
where the sum is taken over all sequences $(i_1, \ldots, i_{n-j+1})$ of non-negative integers such that $i_1 + \cdots + i_{n-j+1} = j$ and 
$i_1 + 2 i_2  + \cdots + (n-j+1)i_{n-j+1} = n$.
In particular $B_{n,n} (x_1) = x_1^n$.
Applying \eqref{Faa di Bruno} and using the fact that $f_0(0)=0$, $B_{n,n} \left( f_0^{(1)}(0)\right)= f_0^{(1)}(0)^{n}$ and $\hat Q^{(j)} (0)=0$ for all $j \in \{ 0, \ldots , n-1 \}$, we get
\begin{equation}
\label{derivee nieme Q(f(t))}
\left( \hat Q \circ f \right)^{(n)} (0) = \hat Q^{(n)} (0) \left( f_0^{(1)}(0) \right)^{n}.
\end{equation}
Then taking the derivative of order $n$ of both sides of \eqref{relation Q_k} and using \eqref{derivee nieme Q(f(t))}, we obtain that $\gamma_k \hat Q^{(n)} (0) = \hat Q^{(n)} (0) \gamma_{n}$ for $n<k-1$, which implies that $\hat Q^{(n)}(0)=0$.

Now we show that $\hat q_{k,j} = q_{k,j}$ for any $j \geq k+1$.
Using Fubini's theorem, the fact that $f_0, \ldots, f_{n-1}$ are i.i.d.\ and  iterating (\ref{relation Q_k}), we get
\begin{equation}
\label{relation Qk gn}
\mathbb{E} \left[  Q_k (\bar{g}_n (t)) \right] = \gamma_k^n  Q_k (t) \quad \text{and} \quad
\mathbb{E} \left[ \hat Q_k (\bar{g}_n (t)) \right] = \gamma_k^n \hat Q_k (t), 
\end{equation}
where $\bar{g}_n (t) = f_{n-1} \circ \ldots \circ f_0 (t) $.
By \eqref{relation Qk gn}, for all $t \in [0,1)$ and $n \in \mathbb{N}$, we have
\begin{eqnarray}
\label{Q1-Q2}
\left| Q_k(t)- \hat Q_k (t) \right| &=& \gamma_k^{-n} \left| \mathbb{E} \left[ Q_k(\bar{g}_n (t))- \hat Q_k(\bar{g}_n (t)) \right] \right| \notag \\
&=& \gamma_k^{-n} \left| \sum_{j=k}^{\infty} ( q_{k,j} - \hat q_{k,j} ) \mathbb{E} \left[ \bar{g}_n^j (t) \right] \right|\notag  \\
&\leq&  \gamma_k^{-n}   \sum_{j=k+1}^{\infty} \left| q_{k,j}- \hat q_{k,j} \right| G_{j,n} (t)  ,
\end{eqnarray}
where $G_{j,n} (t)$ is the generating function of $Z_n$ starting with $j$ individuals. 
To conclude the proof of the unicity it is enough to show that
 \begin{equation}
\lim_{n \to \infty}  \sum_{j=k+1}^{\infty} \left| q_{k,j}- \hat q_{k,j} \right|  \gamma_k^{-n}   G_{j,n} (t) = 0.
\label{final-001}
\end{equation}

We prove \eqref{final-001} using the Lebesgue dominated convergence theorem. 
Note that, by \eqref{cv Qnk ->Qk}, for all $n \in \mathbb{N}$, 
\begin{equation}
\gamma_j^{-n} G_{j,n} (t) \leq Q_j (t). 
\label{final-002}
\end{equation}
Therefore, using the fact that  $ \gamma_j < \gamma_k$ for all $j \geq k+1$, we have 
\begin{eqnarray*}
\lim_{n \to \infty} \gamma_k^{-n} G_{j,n} (t) 
= \lim_{n \to \infty} \left(\frac{\gamma_j}{ \gamma_k}  \right)^n \gamma_j^{-n} G_{j,n}(t)  
\leq \lim_{n \to \infty} \left(\frac{\gamma_j}{ \gamma_k}\right)^n Q_j (t)= 0,
\end{eqnarray*}
and $\gamma_k^{-n} G_{j,n} (t) \leq Q_j (t)$.
Now we show that $\sum_{j=k+1}^{\infty} \left| q_{k,j} - \hat q_{k,j} \right| Q_j (t) < \infty$ for all $t \in [0,1)$.
Indeed, by part b) of Theorem \ref{thm small value probability 2}, we have $\sum_{j=k}^{\infty} q_{k,j} j^{-r}< \infty$ for any $r>r_k$. 
In particular for a fixed $r>r_k$, there exists a constant $C>0$ such that, 
for all $j \geq 1$, $i \geq j$, it holds  $q_{j,i} \leq C i^{r}$. 
%In particular, for any fixed $s>1$, there exists a constant $C>0$ such that $q_{j,i} \leq C s^i$. 
Therefore, %for $t<1/s$, we get
\[
Q_j(t) \leq C \sum_{i=j}^{\infty} i^{r} t^i  \leq C t^{j} \sum_{i=0}^{\infty} (i+j)^{r} t^i \leq C \left( 2^r t^{j} \sum_{i=0}^{\infty} i^{r} t^i +  2^r t^{j} \sum_{i=0}^{\infty} j^{r} t^i \right)
\leq C_{r} j^r t^j.
% \leq  C t^j \sum_{i=0}^{\infty} s^i t^i = C \frac{t^j}{1-ts}.
\] 
Since $Q_k$ and $\hat Q_k$ are power series whose radii of convergence are equal to 1, we have, for any $t<1$,
\begin{equation}
\label{majoration Q}
\sum_{j=k+1}^{\infty} q_{k,j} Q_j (t) \leq  C_r  \sum_{j=k+1}^{\infty}  q_{k,j} j^r t^j < \infty,    
%\leq C \frac{Q_k^1 (t)}{1-ts} < \infty, 
\quad \text{and} \quad \sum_{j=k+1}^{\infty} \hat q_{k,j} Q_j (t)
< \infty.
\end{equation}
%and \eqref{majoration Q} holds for any $t \in [0,1)$. 
Using the dominated convergence theorem, we see that 
$$ 
\lim_{n \to \infty} \gamma_k^{-n}   \sum_{j=k+1}^{\infty} \left| q_{k,j} - \hat q_{k,j} \right| G_{j,n} (t) =0.
$$ 
Therefore, from \eqref{Q1-Q2} we conclude that $Q_k (t)=\hat Q_k(t)$
for all $t \in [0,1)$.
This ends the proof of Theorem \ref{thm small value probability 2}.

\bibliographystyle{plain}
\bibliography{Bibliography_BPRE2}

\nocite{smith}

\nocite{athreya}

\nocite{athreya1971branching}

\nocite{athreya1971branching2}

\nocite{liu}

\nocite{bansaye2009large}

\nocite{boinghoff2010upper}

\nocite{bansaye2011upper}

\nocite{bansaye2012lower}

\nocite{kozlov2006large}

\nocite{bansaye2014small}

\end{document}